\newtheorem{theorem}{Theorem}[section]
\newtheorem{lemma}[theorem]{Lemma}
\newtheorem{corollary}[theorem]{Corollary}
\theoremstyle{definition}
\theoremstyle{remark}
\newtheorem{remark}[theorem]{Remark}
\numberwithin{equation}{section}
\begin{document}

\title{Rigidity theorems for  compact Bach-flat manifolds with positive constant scalar curvature}

\author{Haiping Fu}
\address{Department of Mathematics, Nanchang University, Nanchang 330047, P. R. China
}
\email{mathfu@126.com(H. P. Fu)}
\thanks{Supported by the National Natural Science Foundation of China (11261038, 11361041).}

\author{Jianke Peng}
\address{Department of Mathematics, Nanchang University, Nanchang 330047, P. R. China}
\email{pjkdahua@163.com(J. K. Peng)}
\thanks{}

\subjclass[2010]{53C20; 53C24.}

\date{May 9, 2016 and, accepted, April 19, 2017.}

\dedicatory{}

\keywords{Bach-flat; constant curvature space; Weyl curvature tensor; trace-free Riemannian curvature tensor.}

\begin{abstract}
In this paper, we prove some rigidity theorems for compact Bach-flat $n$-manifold with the positive constant scalar curvature. In particular, our conditions in Theorem 1.4  have the additional properties of being sharp.
\end{abstract}

\maketitle

\section{Introduction}
\par Let $(M^n,g) (n\geq3)$ be an $n$-dimensional Riemannian manifold with the Riemannian curvature tensor $Rm=\{R_{ijkl}\}$, the Weyl curvature tensor $W=\{W_{ijkl}\}$, the Ricci tensor $Ric=\{R_{ij}\}$ and the scalar curvature $R$. For any manifold of dimension $n\geq4$, the Bach tensor, introduced by Bach \cite{B}, is defined as \begin{eqnarray}
B_{ij}\equiv\frac{1}{n-3}\nabla^k\nabla^lW_{ikjl}+\frac{1}{n-2}R^{kl}W_{ikjl}.
\end{eqnarray}
Here and hereafter the Einstein convention of summing over the repeated indices will be adopted. In \cite{KL}, Korzynski and Lewandowski proved that the Bach tensor can be identified with the Yang-mills current of the  Cartan  normal conformal connection.
Recall that a metric $g$ is called Bach-flat if the Bach tensor vanishes. It is easy to see that $B_{ij}=0$ if $(M^n,g)$ is either locally conformally flat, or an Einstein manifold. In the case of $n=4$, $g$ is Bach-flat if and only if it is a critical metric of the functional (see \cite{{Be},{GN}}) $$W:g\mapsto\int_{M}{|W_g|}^2dV_g.$$

Now we introduce the definition of the Yamabe constant. Given a complete Riemannian $n$-manifold $(M^n,g)$ of dimension $n\geq3$, the Yamabe constant $Y(M,[g])$ ($[g]$ is the conformal class of $g$) is defined as
$$Y(M,[g])\equiv\inf\limits_{\tilde{g}\in[g]}\frac{\int_M{R_{\tilde{g}}}dV_{\tilde{g}}}{\left(\int_MdV_{\tilde{g}}\right)^{\frac{n-2}{n}}}=\inf\limits_{0\neq u\in C^\infty_0(M^n)}\frac{\int_{M}\left(\frac{4(n-1)}{n-2}|\nabla u|^2+Ru^2\right)dV_g}{\left(\int_{M}|u|^\frac{2n}{n-2}dV_g\right)^\frac{n-2}{n}}.$$
The important works of Aubin \cite{A}, Schoen \cite{S}, Trudinger \cite{T} and Yamabe \cite{Y} showed that  for compact manifolds the infimum in the above is always achieved. There are noncompact complete Riemannian manifolds of negative scalar curvature with positive Yamabe constant. For example, any simply connected complete locally conformally flat manifold has positive Yamabe constant (see \cite{SY} ). Furthermore, for compact manifolds, $Y(M,[g])$ is determined by the sign of the scalar curvature $R$ (see \cite{A} ), and for noncompact manifolds, $Y(M,[g])$ is always positive if $R$ vanishes (see \cite{D}).

The curvature pinching phenomenon plays an important role in global differential geometry. Some isolation theorems of Weyl curvature tensor of positive Einstein manifolds are given in \cite{{HV},{IS},{Si}}, when its $L^{\frac{n}{2}}$-norm is small. Recently, two rigidity theorems of Weyl curvature tensor of positive Einstein manifolds are given
in \cite{{C},{FX2},{FX3}}, which improve results due to \cite{{HV},{IS},{Si}}.  The first author and Xiao have studied compact manifolds with harmonic curvature to obtain some rigidity results in \cite{{F},{FX}}. Here when a  Riemannian manifold satisfies $\delta Rm= \{\nabla^lR_{ijkl}\}=0$, we call it a manifold with harmonic curvature.
Bach-flat manifolds have been studied by many authors. For any complete Bach-flat manifold, Kim \cite{K} has studied their rigidity phenomena  and derived that a complete  Bach-flat 4-manifold $M^4$ with nonnegative constant scalar curvature and the positive Yamabe constant is an Einstein manifold if the $L^2$-norm of the trace-free Riemannian curvature tensor $\mathring{Rm}$ is small enough. Later, Chu \cite{Ch} improved Kim's result and showed that $M^4$ is in fact a space of constant curvature under the same assumptions.  Chu and Feng \cite{CF} proved the rigidity result for  $n$-dimensional Bach-flat manifolds with constant scalar curvature and positive Yamabe constant. For a compact Bach-flat manifold $M^4$ with the positive Yamabe constant, Chang et al. \cite{CQY} proved that $M^4$ is conformal equivalent to the standard four-sphere provided that the $L^2$-norm of the Weyl curvature tensor $W$ is small enough, and also showed that there is only finite diffeomorphism class with a bounded $L^2$-norm of $W$.

Now, we are interested in $L^p$ pinching problems for  compact Bach-flat manifolds with positive constant scalar curvature. In this paper, under some $L^p$ pinching conditions,  we show that the  compact Bach-flat manifold with positive constant scalar curvature is spherical space form or Einstein manifold. More precisely, we have the following theorems:
\begin{theorem}
Let $(M^n,g) (n\geq4)$ be an $n$-dimensional  compact Bach-flat Riemannian manifold with positive constant scalar curvature. For $p\geq\frac{n}{2}$, if
$$\left(\int_{M}|\mathring{Rm}|^pdV_g\right)^{\frac{1}{p}}<\varepsilon(n)Y(M,[g])^\frac{n}{2p}R^{1-\frac{n}{2p}},$$
where $\varepsilon(n)$ is a constant depending only on $n$, i.e.,
\[
\varepsilon(n)=\left\{
\begin{array}{cc}
\frac{n-2}{4(n-1)\left(C(n)+(n-2)\sqrt{\frac{n-2}{2(n-1)}}\right)}, &\mbox{if $n=4,5$ and $p=\frac{n}{2}$,}\\
\left[\frac{(n-2)(2p-n)}{n(6-n)}\right]^{\frac{n}{2p}}\frac{(6-n)p}{2(n-1)(2p-n)\left(C(n)+(n-2)\sqrt{\frac{n-2}{2(n-1)}}\right)}, &\mbox{if $n=4,5$ and $\frac{n}{2}<p<\frac{2n}{n-2}$,}\\
\frac{1}{(n-1)\left(C(n)+(n-2)\sqrt{\frac{n-2}{2(n-1)}}\right)}, &\mbox{if $n\geq6$ and $p\geq\frac{n}{2}$ or if $n=4,5$ and $p\geq\frac{2n}{n-2}$,}
\end{array}\right.
\]
and $C(n)$ is defined in Lemma 2.1,
then $(M^n,g)$ is isometric to a quotient of the round $\mathbb{S}^n$.
\end{theorem}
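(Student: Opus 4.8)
The plan is to combine a Bochner--Weitzenb\"ock identity for the trace-free curvature tensor $\mathring{Rm}$ with the Yamabe inequality, using the Bach-flat and constant-scalar-curvature hypotheses to eliminate the uncontrolled second-order terms, and then to absorb the remaining cubic curvature term by means of the $L^p$-pinching assumption. First I would record the orthogonal decomposition $|\mathring{Rm}|^2=|W|^2+\frac{4}{n-2}|\mathring{Ric}|^2$, where $\mathring{Ric}=Ric-\frac{R}{n}g$, so that estimates on $W$ and $\mathring{Ric}$ reassemble into estimates on $\mathring{Rm}$. Since $R$ is constant, the contracted second Bianchi identity gives $\nabla^jR_{ij}=0$; hence $\mathring{Ric}$ is divergence-free and the Cotton tensor reduces to $C_{ijk}=\nabla_k\mathring{R}_{ij}-\nabla_j\mathring{R}_{ik}$. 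Via the identity $\nabla^lW_{ikjl}=\frac{n-3}{n-2}C_{ijk}$, the Bach-flat equation $B_{ij}=0$ turns into a relation between $\nabla^kC_{ijk}$ and the quadratic term $R^{kl}W_{ikjl}$, which is exactly the input needed to rewrite the otherwise uncontrolled term $\nabla^k\nabla^lW_{ikjl}$ appearing in the rough Laplacian of $\mathring{Rm}$.

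Next I would compute $\frac12\Delta|\mathring{Rm}|^2=|\nabla\mathring{Rm}|^2+\langle\mathring{Rm},\Delta\mathring{Rm}\rangle$ and, using the second Bianchi identity together with the two relations above, express $\langle\mathring{Rm},\Delta\mathring{Rm}\rangle$ as a linear term proportional to $R|\mathring{Rm}|^2$ (produced by the constant scalar curvature) plus cubic curvature terms. Integrating over the compact manifold annihilates the left-hand side and yields an inequality of the schematic form
\[
\int_M|\nabla\mathring{Rm}|^2\,dV+c_1(n)R\int_M|\mathring{Rm}|^2\,dV\le\int_M\langle\mathring{Rm}*\mathring{Rm},\mathring{Rm}\rangle\,dV,
\]
with $c_1(n)>0$. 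The algebraic Lemma~2.1 then supplies the sharp pointwise bound $\langle\mathring{Rm}*\mathring{Rm},\mathring{Rm}\rangle\le C(n)|\mathring{Rm}|^3$; together with the $W$--$\mathring{Ric}$ cross-term contributions, which account for the factor $(n-2)\sqrt{\frac{n-2}{2(n-1)}}$ in $\varepsilon(n)$, the right-hand side is at most $\big(C(n)+(n-2)\sqrt{\frac{n-2}{2(n-1)}}\big)\int_M|\mathring{Rm}|^3\,dV$.

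To close the argument I would estimate $\int_M|\mathring{Rm}|^3$. When $p=\frac n2$, H\"older's inequality gives exactly
\[
\int_M|\mathring{Rm}|^3\,dV\le\Big(\int_M|\mathring{Rm}|^{n/2}\,dV\Big)^{2/n}\Big(\int_M|\mathring{Rm}|^{\frac{2n}{n-2}}\,dV\Big)^{\frac{n-2}{n}},
\]
and the Yamabe inequality applied to $u=|\mathring{Rm}|$ (legitimized by the Kato inequality $|\nabla|\mathring{Rm}||\le|\nabla\mathring{Rm}|$) bounds the last factor by $Y(M,[g])^{-1}\big(\tfrac{4(n-1)}{n-2}\int_M|\nabla\mathring{Rm}|^2+R\int_M|\mathring{Rm}|^2\big)$. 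Substituting into the inequality above produces, on the right, a multiple of $\int_M|\nabla\mathring{Rm}|^2$ and a multiple of $R\int_M|\mathring{Rm}|^2$, and the argument succeeds once both multiples are strictly dominated by the left-hand coefficients $1$ and $c_1(n)$. These are two separate smallness requirements on $\|\mathring{Rm}\|_{L^{n/2}}$, and $\varepsilon(n)$ is precisely the more restrictive of them: the gradient requirement yields the bound $\frac{n-2}{4(n-1)(C(n)+\cdots)}$ while the zeroth-order requirement yields $\frac{c_1(n)}{C(n)+\cdots}$, and matching the stated formula forces $c_1(n)=\frac1{n-1}$, so the gradient requirement binds for $n=4,5$ and the zeroth-order one for $n\ge6$, with the threshold at $n=6$. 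For $p>\frac n2$ the same scheme applies after interpolating $\|\mathring{Rm}\|_{L^{2p/(p-1)}}$ between $L^2$ and $L^{\frac{2n}{n-2}}$, which shifts the balance and accounts both for the extra factor in the range $\frac n2<p<\frac{2n}{n-2}$ and for the reversion to the $n\ge6$ form once $p\ge\frac{2n}{n-2}$. In every case the calibrated pinching hypothesis strictly absorbs the right-hand side, forcing $\int_M|\nabla\mathring{Rm}|^2=0$ and $\int_M|\mathring{Rm}|^2=0$, hence $\mathring{Rm}\equiv0$; then $R_{ijkl}=\frac{R}{n(n-1)}(g_{ik}g_{jl}-g_{il}g_{jk})$, so $(M^n,g)$ has constant positive sectional curvature and, by the classification of space forms, is isometric to a quotient of the round $\mathbb{S}^n$.

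I expect the main obstacle to be the derivation of the Weitzenb\"ock identity in the correct coercive form. The Bach-flat condition couples the Weyl and trace-free Ricci parts of $\mathring{Rm}$, so that identifying the sign and the exact value of $c_1(n)$ and pairing it with the optimal cubic constant $C(n)$ of Lemma~2.1 must be carried out with optimal rather than crude quadratic estimates; this optimality is exactly what underlies the sharpness asserted for $\varepsilon(n)$. The remaining step, deciding in each range of $(n,p)$ which of the two absorption requirements is binding and tracking the interpolation constants, is more bookkeeping than genuine difficulty, but it is what manufactures the piecewise shape of $\varepsilon(n)$.
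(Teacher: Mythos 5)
Your proposal follows essentially the same route as the paper's proof: the Weitzenb\"ock inequality for $\mathring{Rm}$ under constant scalar curvature (Lemma 2.1), the Bach-flat equation paired against $\mathring{Ric}$ to convert $\int_M|\delta\mathring{Rm}|^2$ into the Weyl--Ricci cross term estimated by Huisken's inequality (which is exactly where the $(n-2)\sqrt{\tfrac{n-2}{2(n-1)}}$ contribution comes from), then the Kato inequality plus the Yamabe--Sobolev inequality and a H\"older/Young absorption of the cubic term, with the same $(n,p)$ case analysis and the same conclusion $\mathring{Rm}\equiv0$. The only differences are bookkeeping---you enforce two separate smallness requirements (gradient and zeroth-order) and interpolate $L^{2p/(p-1)}$ between $L^2$ and $L^{2n/(n-2)}$, whereas the paper applies Young-then-H\"older and optimizes over a pair of parameters $(\epsilon,\eta)$---and both organizations yield the identical constants $\varepsilon(n)$.
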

\begin{corollary}
Let $(M^n,g) (n\geq4)$ be an $n$-dimensional  compact Bach-flat Riemannian manifold with positive constant scalar curvature. If
$$\left(\int_{M}|\mathring{Rm}|^{\frac n2}dV_g\right)^{\frac{2}{n}}<\varepsilon(n)Y(M,[g]),$$
where
\[
\varepsilon(n)=\left\{
\begin{array}{cc}
\frac{n-2}{4(n-1)\left(C(n)+(n-2)\sqrt{\frac{n-2}{2(n-1)}}\right)}, &\mbox{if $n=4,5$,}\\
\frac{1}{(n-1)\left(C(n)+(n-2)\sqrt{\frac{n-2}{2(n-1)}}\right)}, &\mbox{if $n\geq6$,}
\end{array}\right.
\]
then $(M^n,g)$ is isometric to a quotient of the round $\mathbb{S}^n$.
\end{corollary}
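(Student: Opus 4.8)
The plan is to prove Corollary 1.2 by showing that it is a direct consequence of Theorem 1.1 applied with the specific exponent $p = \frac{n}{2}$. Since the corollary concerns precisely the $L^{n/2}$-norm of the trace-free Riemannian curvature tensor $\mathring{Rm}$, I would specialize the pinching inequality of Theorem 1.1 to this borderline case and verify that the constant $\varepsilon(n)$ reduces exactly to the one stated in the corollary.

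\medskip

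First I would substitute $p = \frac{n}{2}$ into the hypothesis of Theorem 1.1. The exponent of the Yamabe constant, $\frac{n}{2p}$, becomes $1$, and the exponent of the scalar curvature, $1 - \frac{n}{2p}$, becomes $0$; hence the right-hand side collapses from $\varepsilon(n)\,Y(M,[g])^{n/(2p)}R^{1-n/(2p)}$ to simply $\varepsilon(n)\,Y(M,[g])$, which is exactly the form appearing in Corollary 1.2. Simultaneously the left-hand side $\left(\int_M |\mathring{Rm}|^p\,dV_g\right)^{1/p}$ becomes $\left(\int_M |\mathring{Rm}|^{n/2}\,dV_g\right)^{2/n}$, again matching the corollary. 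Thus the inequality in the corollary is literally the $p = \frac{n}{2}$ instance of the inequality in Theorem 1.1.

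\medskip

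Next I would check that the piecewise definition of $\varepsilon(n)$ in Theorem 1.1 collapses correctly at $p = \frac{n}{2}$. For $n = 4, 5$, the first branch of $\varepsilon(n)$ in Theorem 1.1 is stated precisely for the case $p = \frac{n}{2}$, and it reads $\frac{n-2}{4(n-1)\left(C(n)+(n-2)\sqrt{\frac{n-2}{2(n-1)}}\right)}$, which coincides verbatim with the $n = 4,5$ branch of the corollary. For $n \geq 6$, the third branch of Theorem 1.1 covers $p \geq \frac{n}{2}$ (in particular $p = \frac{n}{2}$) and gives $\frac{1}{(n-1)\left(C(n)+(n-2)\sqrt{\frac{n-2}{2(n-1)}}\right)}$, matching the $n \geq 6$ branch of the corollary. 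So the constants agree in every dimension.

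\medskip

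Therefore, under the hypothesis of the corollary, all the hypotheses of Theorem 1.1 with $p = \frac{n}{2}$ are satisfied, and the conclusion that $(M^n,g)$ is isometric to a quotient of the round $\mathbb{S}^n$ follows immediately. I do not anticipate a genuine obstacle here, since the corollary is a pure specialization; the only point requiring care is the bookkeeping at the boundary exponent $p = \frac{n}{2}$, ensuring that this value falls into the intended branch of the case distinction in Theorem 1.1 (namely the first branch for $n = 4,5$ and the third branch for $n \geq 6$) and that the two exponents $\frac{n}{2p}$ and $1 - \frac{n}{2p}$ simplify to $1$ and $0$ respectively. Once this is verified, no further analysis is needed.
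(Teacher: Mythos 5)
Your proposal is correct and matches the paper's intent exactly: the paper offers no separate proof of Corollary 1.2 precisely because it is the $p=\frac{n}{2}$ specialization of Theorem 1.1, where $Y(M,[g])^{\frac{n}{2p}}R^{1-\frac{n}{2p}}$ reduces to $Y(M,[g])$ and the first (for $n=4,5$) and third (for $n\geq 6$) branches of $\varepsilon(n)$ give the stated constants. Your bookkeeping of the exponents and branch selection is accurate, so nothing further is needed.
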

\begin{remark}
The above $L^p$-pinching condition in Theorem $1.1$ is invariant under any homothety. $L^{\frac n2}$ trace-free Riemannian curvature pinching theorems have been shown by Kim \cite{K}, Chu \cite{C}, and Chu and Feng \cite{CF}, in which the pinching constant  are not explicit, respectively.  Theorem $1.1$ extends the $L^{p}$ trace-free Riemannian curvature pinching theorems given by \cite{{C},{CF},{K}} in power $p=\frac n2$ to  $p\geq\frac n2$.
\end{remark}
\begin{theorem}
Let $(M^n,g) (n\geq4)$ be an $n$-dimensional  compact Bach-flat Riemannian manifold with positive constant scalar curvature. If
\begin{eqnarray}
\left(\int_{M}\left|W+\frac{\sqrt{n}}{2\sqrt{2}(n-2)}\mathring{Ric}\circledwedge g\right|^{\frac{n}{2}}dV_g\right)^{\frac{2}{n}}<C_1(n)Y(M,[g]),
\end{eqnarray}
where
\[
C_1(n)=\left\{
\begin{array}{cc}
\sqrt{\frac{n-2}{32(n-1)}}, &\mbox{if $n=4,5$,}\\
\frac{1}{\sqrt{2(n-2)(n-1)}}, &\mbox{if $n\geq6$,}
\end{array}\right.
\]
then $(M^n,g)$ is an Einstein manifold. In particular, for $n=4,5$, then $(M^n,g)$ is isometric to a quotient of the round $\mathbb{S}^n$; for $n\geq6$, if the pinching constant in (1.2) is weakened to $\frac{2Y(M,[g])}{nC_2(n)}$, where $C_2(n)$ is defined in Lemma 2.1 of \cite{FX3}, then $(M^n,g)$ is isometric to a quotient of the round $\mathbb{S}^n$.
\end{theorem}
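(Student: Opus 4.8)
The plan is to run a Bochner--Weitzenb\"ock argument on the trace-free Ricci tensor $\mathring{Ric}$, using the Bach-flat equation as the source of the required second-order identity. Since $R$ is constant, the contracted second Bianchi identity gives $\operatorname{div}\mathring{Ric}=0$ and reduces the Cotton tensor to $C_{ijk}=\nabla_i\mathring R_{jk}-\nabla_j\mathring R_{ik}$; combined with $\nabla^l W_{ikjl}=\tfrac{n-3}{n-2}C_{ijk}$, this rewrites $\tfrac1{n-3}\nabla^k\nabla^l W_{ikjl}$ as a divergence of the Cotton tensor, i.e.\ as $\tfrac1{n-2}\Delta\mathring R_{ij}$ plus terms quadratic in curvature. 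Setting $B_{ij}=0$ and commuting covariant derivatives, I expect an identity of the schematic form
\[
\Delta \mathring R_{ij}=\frac{R}{n-1}\mathring R_{ij}+\frac{n}{n-2}\mathring R_i{}^k\mathring R_{kj}-\frac{1}{n-2}|\mathring{Ric}|^2 g_{ij}+2W_{ikjl}\mathring R^{kl}.
\]
Pairing with $\mathring R^{ij}$, integrating by parts (the $g_{ij}$ term drops because $\mathring{Ric}$ is trace-free), and bounding the cubic terms by their absolute values yields the coercive integral inequality
\[
\int_M|\nabla\mathring{Ric}|^2\,dV+\frac{R}{n-1}\int_M|\mathring{Ric}|^2\,dV\le\frac{n}{n-2}\int_M\bigl|\mathring R_i{}^j\mathring R_j{}^k\mathring R_k{}^i\bigr|\,dV+2\int_M\bigl|W_{ikjl}\mathring R^{ij}\mathring R^{kl}\bigr|\,dV,
\]
where positivity of $R$ makes the left side a genuine coercive quadratic functional in $\mathring{Ric}$.

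The second step explains the mysterious coefficient $c=\tfrac{\sqrt n}{2\sqrt2(n-2)}$ in the pinched quantity $T:=W+c\,\mathring{Ric}\circledwedge g$. Because $W$ and $\mathring{Ric}\circledwedge g$ are orthogonal summands of the curvature and $|\mathring{Ric}\circledwedge g|^2=4(n-2)|\mathring{Ric}|^2$, I compute $|T|^2=|W|^2+\tfrac{n}{2(n-2)}|\mathring{Ric}|^2$. I would then estimate the two cubic integrands pointwise by the Okumura-type bound $\bigl|\mathring R_i{}^j\mathring R_j{}^k\mathring R_k{}^i\bigr|\le\tfrac{n-2}{\sqrt{n(n-1)}}|\mathring{Ric}|^3$ and the Weyl--Ricci bound $\bigl|W_{ikjl}\mathring R^{ij}\mathring R^{kl}\bigr|\le\sqrt{\tfrac{n-2}{2(n-1)}}\,|W|\,|\mathring{Ric}|^2$, so that the right-hand integrand is controlled by $\bigl(\sqrt{\tfrac{n}{n-1}}\,|\mathring{Ric}|+\sqrt{\tfrac{2(n-2)}{n-1}}\,|W|\bigr)|\mathring{Ric}|^2$. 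Applying Cauchy--Schwarz against the vector $(|\mathring{Ric}|,|W|)$ \emph{weighted by the decomposition of} $|T|^2$ collapses this to $2\sqrt{\tfrac{n-2}{n-1}}\,|T|\,|\mathring{Ric}|^2$; the value of $c$ is precisely the one making this Cauchy--Schwarz sharp, which is the mechanism producing a sharp pinching constant. Hence the cubic terms are bounded by $2\sqrt{\tfrac{n-2}{n-1}}\int_M|T|\,|\mathring{Ric}|^2\,dV$.

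The third step closes the estimate. By H\"older, $\int_M|T|\,|\mathring{Ric}|^2\,dV\le\|T\|_{n/2}\bigl(\int_M|\mathring{Ric}|^{2n/(n-2)}\,dV\bigr)^{(n-2)/n}$, and the Yamabe--Sobolev inequality for the constant-scalar-curvature metric, $Y(M,[g])\bigl(\int_M f^{2n/(n-2)}\bigr)^{(n-2)/n}\le\tfrac{4(n-1)}{n-2}\int_M|\nabla f|^2+R\int_M f^2$ with $f=|\mathring{Ric}|$, lets me trade the Sobolev norm for the gradient and zeroth-order energies. Substituting the Sobolev bound for $\tfrac{R}{n-1}\int|\mathring{Ric}|^2$ into the coercive inequality and invoking the refined Kato inequality $|\nabla\mathring{Ric}|^2\ge\tfrac{n+2}{n}|\nabla|\mathring{Ric}||^2$, the gradient terms combine with coefficient $\bigl(1-\tfrac{4}{n-2}\cdot(\text{Kato factor})\bigr)$. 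When this coefficient is nonnegative I may discard $\int|\nabla\mathring{Ric}|^2$ and obtain $Y(M,[g])\le(\text{const})\,\|T\|_{n/2}$, contradicting the hypothesis unless $\mathring{Ric}\equiv0$; this is the regime $n\ge6$. For $n=4,5$ the sign of that coefficient forces me to retain the gradient term and argue more wastefully (bounding each energy separately), which is exactly what lowers the admissible constant to $\sqrt{\tfrac{n-2}{32(n-1)}}$ and accounts for the case split in $C_1(n)$ at $n=6$. Either way I conclude $\mathring{Ric}=0$, i.e.\ $(M^n,g)$ is Einstein.

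Finally, once the manifold is Einstein we have $T=W$ and $|T|^2=|W|^2$, so $(1.2)$ becomes a smallness hypothesis on $\|W\|_{n/2}$ for a positive Einstein manifold. For $n=4,5$ the chosen $C_1(n)$ lies below the Weyl-isolation threshold, so the Simons-type identity for the (now harmonic) Weyl tensor forces $W=0$; for $n\ge6$ I invoke the Einstein Weyl-isolation theorem of \cite{FX3} under its (less restrictive) threshold $\tfrac{2Y(M,[g])}{nC_2(n)}$ to conclude $W=0$. Since $\mathring{Ric}=0$ and $W=0$ give $\mathring{Rm}=0$, the metric has constant curvature, and compactness with $R>0$ makes it a spherical space form. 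I expect the principal obstacle to be the first step: deriving the precise Weitzenb\"ock identity with the correct linear-in-$R$ coefficient and cubic coefficients from the Bach-flat condition (the divergence-of-Cotton computation), and then the delicate Sobolev--Kato bookkeeping needed to pin down the sharp constants and the exact location $n=6$ of the dimensional dichotomy.
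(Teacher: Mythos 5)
Your overall architecture matches the paper's: a Weitzenb\"ock formula for $\mathring{Ric}$ coming from Bach-flatness and constant scalar curvature, an algebraic bound of the cubic terms by $|T|\,|\mathring{Ric}|^2$ with $T=W+\tfrac{\sqrt n}{2\sqrt2(n-2)}\mathring{Ric}\circledwedge g$, a Yamabe--Sobolev absorption, and finally the Einstein rigidity theorem of \cite{FX3}. However, your second step has a genuine quantitative gap that is fatal to the stated constant. You bound the two cubic terms \emph{separately} (triangle inequality, then Okumura for $\mathring{R}_{ij}\mathring{R}_{jk}\mathring{R}_{ik}$ and Huisken for $W_{ikjl}\mathring R^{ij}\mathring R^{kl}$) and only afterwards apply a weighted Cauchy--Schwarz; this yields, exactly as you state, the bound $2\sqrt{\tfrac{n-2}{n-1}}\,|T|\,|\mathring{Ric}|^2$, whereas the paper's Lemma 3.3 gives $\sqrt{\tfrac{2(n-2)}{n-1}}\,|T|\,|\mathring{Ric}|^2$ for the same combination: your constant is worse by a factor of $\sqrt2$. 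Propagated through the Sobolev step, your argument proves the theorem only with pinching constant $C_1(n)/\sqrt2$, i.e. $\sqrt{\tfrac{n-2}{64(n-1)}}$ for $n=4,5$ and $\tfrac1{2\sqrt{(n-2)(n-1)}}$ for $n\ge6$. This loss is structural, not bookkeeping: for $n\ge6$ the constant $C_1(n)$ is attained in the limit by $\mathbb S^1(t)\times\mathbb S^{n-1}$ (Remark 1.5), and at that configuration ($W=0$, $\mathring{Ric}$ with one eigenvalue of multiplicity one) Okumura is an equality but your final Cauchy--Schwarz is necessarily strict, since equality would force $(|W|,|\mathring{Ric}|)$ to be proportional to a weight vector with nonzero $W$-component. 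The paper's Lemma 3.3 avoids separating the terms: it writes the whole combination as the single pairing
\begin{equation*}
-\tfrac14\Bigl\langle W+\tfrac{n}{4(n-2)}\mathring{Ric}\circledwedge g,\ \mathring{Ric}\circledwedge\mathring{Ric}\Bigr\rangle,
\end{equation*}
decomposes $\mathring{Ric}\circledwedge\mathring{Ric}=T'+V'+U'$ orthogonally, uses the orthogonality of $W$ to $V',U'$ and of $\mathring{Ric}\circledwedge g$ to $T'$, and exploits the norm identity $|T'|^2+\tfrac n2|V'|^2=\tfrac{8(n-2)}{n-1}|\mathring{Ric}|^4$; that identity (saturated precisely by the product-metric configuration) is where the missing $\sqrt2$ comes from.

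Two smaller points. First, your Weitzenb\"ock identity is only asserted ``schematically'' (with a sign slip in the Weyl term relative to Lemma 3.1 of the paper, harmless since you take absolute values); the Bach-flat input enters exactly as you guess, via $\mathring R^{ij}\nabla^k\nabla^l\mathring R_{ikjl}=-\mathring R^{ij}\mathring R^{kl}W_{ikjl}$, so this step would survive being written out. Second, the refined Kato inequality $|\nabla\mathring{Ric}|^2\ge\tfrac{n+2}{n}\,|\nabla|\mathring{Ric}||^2$ is invoked without justification and is also inconsistent with your own case split: with the \emph{plain} Kato inequality your ``discard the gradient'' step works exactly when $1\ge\tfrac4{n-2}$, i.e. $n\ge6$, which is what truly locates the dimensional dichotomy, whereas your claimed Kato constant would move the cut to $n=5$. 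The paper uses plain Kato together with the exponent trick $u^\alpha$ (choosing $\alpha$ with $4\alpha^2-2(n-2)\alpha+(n-2)=0$ for $n\ge6$ and $\alpha=1$ for $n=4,5$), which, combined with the sharp cubic estimate above, is what produces $C_1(n)$; your closing step (Einstein plus \cite{FX3} Weyl isolation gives the round sphere) agrees with the paper.
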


\begin{remark}
When $n\geq6$, the inequality (1.2) of this theorem is optimal. The critical case is given by the following example. If $(\mathbb{S}^1(t)\times\mathbb{S}^{n-1},g_t)$ is the product of the circle of radius $t$ with $\mathbb{S}^{n-1}$, and if $g_t$ is the standard product metric normalized such that $Vol(g_t)=1$, we have $W=0$, $g_t$ is a Yamabe metric for small $t$ (see \cite{S2}), and $\left(\int_{M}|\mathring{Ric}|^{\frac{n}{2}}dV_g\right)^{\frac{2}{n}}=\frac{Y(M,[g])}{\sqrt{n(n-1)}}$, which is the critical case of the inequality (1.2) in Theorem $1.4$. We know that $(\mathbb{S}^1(t)\times\mathbb{S}^{n-1},g_t)$ is not Einstein.
\end{remark}
\begin{corollary}
Let $(M^4,g)$ be a $4$-dimensional  compact Bach-flat Riemannian manifold with positive constant scalar curvature. If
 \begin{eqnarray}
 \int_{M}|W|^2dV_g+\frac{5}{4}\int_{M}|\mathring{Ric}|^2dV_g\leq\frac{1}{48}\int_{M}R^2dV_g,
 \end{eqnarray}
then $(M^4,g)$ is isometric to a quotient of the round $\mathbb{S}^4$.
\end{corollary}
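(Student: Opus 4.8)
The plan is to obtain Corollary 1.6 as the case $n=4$ of Theorem 1.4, where $C_1(4)=\sqrt{1/48}$ and the stated conclusion is already that $(M^4,g)$ is isometric to a quotient of the round $\mathbb{S}^4$. It therefore suffices to show that hypothesis (1.4) forces the pinching inequality (1.2). The first step is purely algebraic: at $n=4$ the coefficient $\frac{\sqrt n}{2\sqrt2(n-2)}$ equals $\frac{1}{2\sqrt2}$, so I must evaluate the norm of the tensor $W+\frac{1}{2\sqrt2}\mathring{Ric}\circledwedge g$.

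Because the Weyl tensor is totally trace-free, $g^{jl}W_{ijkl}=0$, and hence $W$ is orthogonal to every Kulkarni--Nomizu product of a symmetric $2$-tensor with $g$; in particular $\langle W,\mathring{Ric}\circledwedge g\rangle=0$. A direct contraction shows that for any trace-free symmetric $2$-tensor $T$ one has $|T\circledwedge g|^2=4(n-2)|T|^2$, so at $n=4$, $|\mathring{Ric}\circledwedge g|^2=8|\mathring{Ric}|^2$. Combining these, $\bigl|W+\frac{1}{2\sqrt2}\mathring{Ric}\circledwedge g\bigr|^2=|W|^2+\frac18\cdot 8\,|\mathring{Ric}|^2=|W|^2+|\mathring{Ric}|^2$, so the left-hand side of (1.2) is exactly $\bigl(\int_M(|W|^2+|\mathring{Ric}|^2)\,dV_g\bigr)^{1/2}$. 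This also explains the coefficient $\frac54=1+\frac14$ in (1.4): the combined tensor already accounts for one copy of $\int_M|\mathring{Ric}|^2$, and the surplus $\frac14\int_M|\mathring{Ric}|^2$ is reserved for the next step.

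The second step replaces $\int_M R^2$ by the Yamabe constant. Since $R$ is a positive constant, the test function $u\equiv1$ in the Rayleigh quotient defining $Y(M,[g])$ gives $Y(M,[g])\le R\,\mathrm{Vol}(M)^{1/2}$, that is $Y(M,[g])^2\le\int_M R^2\,dV_g$. Rewriting (1.4) as $\int_M(|W|^2+|\mathring{Ric}|^2)\le\frac{1}{48}\int_M R^2-\frac14\int_M|\mathring{Ric}|^2$, what remains is to check that this upper bound lies below $\frac{1}{48}Y(M,[g])^2$, which is equivalent to the scale-invariant comparison $\int_M R^2-Y(M,[g])^2\le 12\int_M|\mathring{Ric}|^2$.

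The comparison $Y(M,[g])^2\ge\int_M R^2-12\int_M|\mathring{Ric}|^2$ is the crux, and I expect it to be the main obstacle. It holds with equality precisely for Einstein metrics: by Obata's theorem an Einstein metric is the Yamabe minimizer of its conformal class, so $Y^2=\int_M R^2$ while $\int_M|\mathring{Ric}|^2=0$. To prove it in general I would pass to the Yamabe minimizer $\tilde g=u^2g\in[g]$, solving $-6\Delta u+Ru=\tilde R\,u^3$ with $\tilde R$ constant and $Y(M,[g])=\tilde R\,\mathrm{Vol}(\tilde g)^{1/2}$, and bound the deficit $\int_M R^2-Y^2=R^2\mathrm{Vol}(M)-\tilde R^2\mathrm{Vol}(\tilde g)$ --- which measures how far $u$ is from being constant --- by $12\int_M|\mathring{Ric}|^2$, the quantity controlling the departure of $g$ from the Einstein state. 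With the comparison in hand, (1.4) yields the pinching inequality (1.2), strictly away from the Einstein boundary, and Theorem 1.4 at $n=4$, together with its rigidity at the Einstein equality case, gives that $(M^4,g)$ is isometric to a quotient of the round $\mathbb{S}^4$.
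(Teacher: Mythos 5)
Your reduction is set up correctly: at $n=4$ the identity $\bigl|W+\tfrac{1}{2\sqrt2}\mathring{Ric}\circledwedge g\bigr|^2=|W|^2+|\mathring{Ric}|^2$ holds, $C_1(4)=\sqrt{1/48}$, and the corollary would indeed follow from Theorem 1.4 once one knows
\[
\int_M R^2\,dV_g-12\int_M|\mathring{Ric}|^2\,dV_g\le Y(M,[g])^2.
\]
But this comparison --- which you yourself call the crux and the main obstacle --- is exactly what you do not prove, and the route you sketch is not viable as stated: passing to the Yamabe minimizer $\tilde g=u^2g$ and trying to bound the deficit $R^2\mathrm{Vol}(M)-\tilde R^2\mathrm{Vol}(\tilde g)$ by $12\int_M|\mathring{Ric}|^2dV_g$ through the Yamabe equation has no evident mechanism, since that equation by itself carries no information about the trace-free Ricci tensor. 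The missing idea (Gursky's, reproduced as Lemma 3.5 in the paper) is special to dimension four: by the Chern--Gauss--Bonnet formula $\int_M|W|^2dV_g-2\int_M|\mathring{Ric}|^2dV_g+\frac16\int_MR^2dV_g=32\pi^2\chi(M)$ and the conformal invariance of $\int_M|W|^2dV_g$, the functional $\int_MR^2dV_g-12\int_M|\mathring{Ric}|^2dV_g$ is a conformal invariant of $[g]$. Evaluating it on a Yamabe metric $\tilde g$, for which $\int_M\tilde R^2dV_{\tilde g}=Y(M,[g])^2$, and discarding the nonpositive term $-12\int_M|\mathring{Ric}_{\tilde g}|^2dV_{\tilde g}$ gives the inequality at once, and also shows that equality forces $\tilde g$ Einstein, i.e.\ $g$ conformally Einstein. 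Without this (or some substitute for it) your argument is incomplete at its central step.

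There is a second gap at the borderline. The corollary's hypothesis (your (1.4), numbered (1.3) in the paper) is non-strict, so the chain of inequalities can collapse to equality, in which case the strict pinching (1.2) is unavailable and Theorem 1.4 does not apply; your appeal to ``Theorem 1.4 \dots\ together with its rigidity at the Einstein equality case'' invokes an equality statement that Theorem 1.4 does not contain. The paper treats this case separately: equality in Lemma 3.5 forces $g$ to be conformally Einstein; since $R$ is a positive constant, Obata's theorem (Proposition 3.1 of \cite{LP}) upgrades this to $g$ itself being Einstein, hence $\mathring{Ric}\equiv0$; then the hypothesis gives $\int_M|W|^2dV_g\le\frac1{48}\int_MR^2dV_g=\frac1{48}Y(M,[g])^2$, which lies strictly inside the pinching required by the rigidity theorem for integrally pinched Einstein manifolds (Theorem 1.1 of \cite{FX3}), and it is that theorem --- not Theorem 1.4 --- which concludes that $M^4$ is isometric to a quotient of the round $\mathbb{S}^4$ in the equality case.
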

\begin{remark}
By the Chern-Gauss-Bonnet formula,
the pinching condition (1.3) in Corollary $1.6$ is equivalent to the following
\begin{eqnarray}
\int_{M}|W|^2dV_g+\frac{2}{39}\int_{M}R^2dV_g\leq\frac{160}{13}\pi^2\chi(M),
\end{eqnarray}
where $\chi(M)$ is the Euler-Poincar\'{e} characteristic of $M$.
\end{remark}
\begin{theorem}
Let $(M^n,g)$ be an $n$-dimensional compact Bach-flat Riemannian manifold with positive constant scalar curvature. If
\begin{eqnarray}
|W|^2+\frac{{n}}{2(n-2)}|\mathring{Ric}|^2\leq\frac{1}{{2(n-2)(n-1)}}R^2,
\end{eqnarray}
then $(M^n,g)$ is isometric to either an Einstein manifold or a quotient of $\mathbb{S}^1\times \mathbb{S}^{n-1}$ with the product metric.
\end{theorem}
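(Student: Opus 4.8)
The plan is to run a Simons--Bochner argument for the trace-free curvature tensor $\mathring{Rm}=W+\frac{1}{n-2}\mathring{Ric}\circledwedge g$, using the Bach-flat hypothesis to convert every second-order curvature term into an algebraic (cubic) one, and then letting the pointwise pinching (1.6) force the integrand of a Bochner identity to have a definite sign. First I would compute, separately, the Weitzenb\"ock formulas for $|\mathring{Ric}|^2$ and $|W|^2$ under the standing assumption that $R$ is constant. Commuting derivatives and using the contracted second Bianchi identity gives, schematically,
\[
\tfrac12\Delta|\mathring{Ric}|^2=|\nabla\mathring{Ric}|^2+a_n R\,|\mathring{Ric}|^2-W_{ikjl}\mathring R^{ij}\mathring R^{kl}-b_n\,\mathrm{tr}(\mathring{Ric}^3),
\]
with dimensional constants $a_n,b_n>0$, together with an analogous identity for $\tfrac12\Delta|W|^2$ whose cubic part consists of the Weyl self-contractions and the coupling $W_{ikjl}\mathring R^{ij}\mathring R^{kl}$. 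The unusual weight $\frac{n}{2(n-2)}$ in (1.6) is then dictated by the unique linear combination $\tfrac12\Delta\big(|W|^2+\frac{n}{2(n-2)}|\mathring{Ric}|^2\big)$ that makes the resulting reaction term a single, optimally estimable cubic form.

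The crucial input is Bach-flatness. Rearranging the definition (1.1) gives the pointwise identity $\frac{1}{n-3}\nabla^k\nabla^l W_{ikjl}=-\frac{1}{n-2}R^{kl}W_{ikjl}$, so the divergence terms $\nabla^l W_{ikjl}$ (equivalently the Cotton tensor, since $R$ is constant) are algebraically tied to $R^{kl}W_{ikjl}$. Substituting this into the two Weitzenb\"ock identities, integrating over the closed manifold $M$ so that $\int_M\Delta(\cdots)\,dV_g=0$, and discarding the resulting exact divergences, I expect the second-order contributions to cancel and to be left with the clean integral identity
\[
\int_M|\nabla\mathring{Rm}|^2\,dV_g=\int_M\Big(\mathcal Q(\mathring{Rm})-c(n)\,R\,|\mathring{Rm}|^2\Big)\,dV_g,
\]
where $\mathcal Q$ collects the cubic curvature contractions and $c(n)>0$ is the combined constant produced by the $R$-linear terms.

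The algebraic core is to estimate $\mathcal Q$ by the combination occurring in the hypothesis. Lemma~2.1 supplies the sharp constant $C(n)$ controlling $|W_{ikjl}\mathring R^{ij}\mathring R^{kl}|$ and the Weyl self-contractions in terms of $|W|$ and $|\mathring{Ric}|$, while the elementary eigenvalue bound $|\mathrm{tr}(\mathring{Ric}^3)|\le\frac{n-2}{\sqrt{n(n-1)}}|\mathring{Ric}|^3$ handles the purely Ricci term. Chaining these, I would obtain
\[
\mathcal Q(\mathring{Rm})\le\sqrt{2(n-2)(n-1)}\,\Big(|W|^2+\tfrac{n}{2(n-2)}|\mathring{Ric}|^2\Big)^{1/2}|\mathring{Rm}|^2,
\]
in which the pinching combination appears with exactly the constant $\frac{1}{2(n-2)(n-1)}$ of (1.6); this is the calibration that makes the inequality sharp. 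Rewriting (1.6) as $\big(|W|^2+\frac{n}{2(n-2)}|\mathring{Ric}|^2\big)^{1/2}\le R/\sqrt{2(n-2)(n-1)}$ and feeding it into the $\mathcal Q$-estimate gives $\mathcal Q\le c(n)R|\mathring{Rm}|^2$ pointwise (the dimensional constants matching precisely, which is the content of the sharpness), so that $\int_M|\nabla\mathring{Rm}|^2\,dV_g\le0$. Therefore $\nabla\mathring{Rm}\equiv0$ and equality holds everywhere in the preceding estimates.

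Finally I would carry out the equality analysis. As $R$ is constant, $\nabla\mathring{Rm}=0$ forces $\nabla Rm=0$, so $(M^n,g)$ is locally symmetric with parallel Ricci tensor. If $\mathring{Ric}\equiv0$ then $g$ is Einstein, which is the first alternative. Otherwise $\mathring{Ric}$ is a nonzero parallel symmetric tensor, and the de Rham theorem splits the universal cover as a Riemannian product; the equality cases in the $\mathcal Q$-estimate force $W\equiv0$ and pin the spectrum of $\mathring{Ric}$ to one simple eigenvalue together with an $(n-1)$-fold eigenvalue, i.e.\ a $1+(n-1)$ splitting into a flat line and a space form. Using compactness together with the positivity and constancy of $R$, this identifies $(M^n,g)$ with a quotient of $\mathbb{S}^1\times\mathbb{S}^{n-1}$ carrying the product metric, the borderline example of Remark~1.5. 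I expect the main obstacle to be the derivation of the clean integral identity from Bach-flatness---correctly integrating the $\nabla^k\nabla^l W_{ikjl}$ term by parts and verifying that the Cotton-tensor and $\nabla\nabla W$ contributions recombine into $\mathcal Q$ with no leftover second-order terms---and, secondarily, the sharp equality analysis that upgrades \emph{locally reducible with equality} to the precise product $\mathbb{S}^1\times\mathbb{S}^{n-1}$.
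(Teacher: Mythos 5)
There is a genuine gap, and it sits exactly at the step you describe as ``the calibration that makes the inequality sharp.'' Because you run the Bochner argument on the full trace-free curvature (equivalently on $|W|^2+\frac{n}{2(n-2)}|\mathring{Ric}|^2$), your cubic form $\mathcal Q$ necessarily contains the Weyl self-contractions $W_{ijkl}W_{ijhm}W_{hmkl}$ and $W_{ijlk}W_{ihlm}W_{hjmk}$, and for these no estimate exists with constants anywhere near what you need. Concretely, in the paper's own full-curvature formula (Lemma 2.1) the favorable term is $\frac{R}{n-1}|\mathring{Rm}|^2$ --- not $R\,|\mathring{Rm}|^2$, so your requirement $c(n)=1$ already fails by a factor of $n-1$ --- while the cubic terms carry the constant $C(n)=\frac{4(n^2-2)}{n\sqrt{n^2-1}}+\frac{n^2-n-4}{\sqrt{(n-2)n(n^2-1)}}+\sqrt{\frac{(n-2)(n-1)}{n}}$. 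Moreover, Bach-flatness does \emph{not} convert every second-order term into an algebraic one pointwise: it is the single identity $\frac{1}{n-3}\nabla^k\nabla^lW_{ikjl}=-\frac{1}{n-2}R^{kl}W_{ikjl}$, and the divergence $\nabla^mW_{ijkm}$ (the Cotton tensor, of size $|\nabla\mathring{Ric}|$) survives in $\Delta|W|^2$; the best one can do is integrate by parts and use (2.16)--(2.17), which costs an extra term $(n-2)\sqrt{\frac{n-2}{2(n-1)}}\,|W|\,|\mathring{Rm}|^2$ inside the integral. Under the hypothesis (1.5) the only pointwise information is $|W|\le R/\sqrt{2(n-2)(n-1)}$, so this single term can already be as large as $\frac{(n-2)R}{2(n-1)}|\mathring{Rm}|^2$, i.e.\ bigger than the good term $\frac{R}{n-1}|\mathring{Rm}|^2$ by the factor $\frac{n-2}{2}$, before the $C(n)|\mathring{Rm}|^3$ terms are even counted. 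The sign of your integrand therefore cannot be concluded from (1.5); the constants are off by a factor of order $n$, which is precisely why the pointwise hypothesis (1.5) is far weaker than the integral smallness needed in Theorem 1.1.

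The paper's proof avoids this by never differentiating $|W|^2$ at all. Its first stage is the Bochner formula for $|\mathring{Ric}|^2$ alone (Lemma 3.1),
\begin{equation*}
\Delta|\mathring{Ric}|^2=2|\nabla\mathring{Ric}|^2-4\mathring{R}_{ij}\mathring{R}_{kl}W_{ikjl}+\tfrac{2n}{n-2}\mathring{R}_{ij}\mathring{R}_{jk}\mathring{R}_{ik}+\tfrac{2R}{n-1}|\mathring{Ric}|^2,
\end{equation*}
where Bach-flatness enters exactly in the contraction $\mathring{R}^{ij}\nabla^k\nabla^lW_{ikjl}$ (eq.\ (3.4)) and the cubic part involves \emph{no} Weyl self-interaction. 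This is what makes (1.5) sharp: by Lemma 3.3, $4\bigl|-W_{ikjl}\mathring{R}^{ij}\mathring{R}^{kl}+\frac{n}{2(n-2)}\mathring{R}_{ij}\mathring{R}_{jk}\mathring{R}_{ik}\bigr|\le 4\sqrt{\frac{n-2}{2(n-1)}}\,|\mathring{Ric}|^2\bigl(|W|^2+\frac{n}{2(n-2)}|\mathring{Ric}|^2\bigr)^{1/2}\le\frac{2R}{n-1}|\mathring{Ric}|^2$ under (1.5), cancelling the positive $R$-term exactly. The maximum principle then yields only $\nabla\mathring{Ric}=0$ (not local symmetry), hence harmonic curvature; the classification stage then needs a \emph{second}, different Weitzenb\"ock formula valid for harmonic curvature ((4.2), whose Weyl coupling has coefficient $-2$ rather than $-4$), the estimate of Remark 3.4 with $K=\frac{n}{n-2}$, and a case split $W\neq0$ (forcing Einstein) versus $W=0$ (forcing, via the eigenvalue rigidity, parallel Ricci, de Rham splitting and conformal flatness, a quotient of $\mathbb{S}^1\times\mathbb{S}^{n-1}$). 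If you want to salvage your outline, replace the full-curvature Bochner quantity by $|\mathring{Ric}|^2$ and adopt this two-stage structure; as written, your key estimate and the ``precise matching'' of constants do not hold.
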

\begin{corollary}
Let $(M^n,g)$ be an $n=4$ or $5$-dimensional compact Bach-flat Riemannian manifold with positive constant scalar curvature. If
 $$
 |W|^2+\frac{{n}}{2(n-2)}|\mathring{Ric}|^2\leq\frac{1}{{2(n-2)(n-1)}}R^2,
$$
then $(M^n,g)$ is isometric to either a quotient of the round $\mathbb{S}^n$ or a quotient of $\mathbb{S}^1\times \mathbb{S}^{n-1}$ with the product metric.
\end{corollary}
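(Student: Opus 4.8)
The plan is to reduce the statement to the Einstein case by invoking Theorem 1.7, and then to dispose of that case using the rigidity theorem for the Weyl curvature of a positive Einstein manifold. First I would observe that the hypothesis is exactly the pointwise pinching (1.5), so Theorem 1.7 applies verbatim and tells us that $(M^n,g)$ is isometric either to a quotient of $\mathbb{S}^1\times\mathbb{S}^{n-1}$ with the product metric — which is already one of the two permitted conclusions — or to an Einstein manifold. Hence everything comes down to showing that, when $n=4$ or $5$, a positive Einstein manifold obeying (1.5) is a quotient of the round $\mathbb{S}^n$.

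In the Einstein case we have $\mathring{Ric}=0$, so (1.5) degenerates to the pure Weyl pinching
\[
|W|^2\le\frac{1}{2(n-2)(n-1)}R^2 .
\]
Since an Einstein metric has harmonic Weyl tensor, $\nabla^l W_{ijkl}=0$, there is a Weitzenböck identity of the shape $\tfrac12\Delta|W|^2=|\nabla W|^2+\tfrac{R}{n-1}|W|^2-(\text{cubic terms in }W)$. Integrating over the closed manifold, bounding the cubic term by $C(n)|W|^3$ with the constant $C(n)$ of Lemma 2.1, and inserting the pointwise bound on $|W|$, one forces $W\equiv 0$; a conformally flat positive Einstein manifold is a spherical space form. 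This is precisely the content of the Weyl rigidity theorems for positive Einstein manifolds, so in a finished write-up I would simply cite \cite{C,FX2,FX3} (the latter also supplying the sharp constant $C_2(n)$ already referenced in Theorem 1.4) rather than redo the Bochner computation.

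The step I expect to be the main obstacle is the numerical comparison that makes this rigidity applicable, namely that the pinching constant $\frac{1}{2(n-2)(n-1)}$ lies below the threshold of the Einstein–Weyl rigidity theorem — equivalently, an inequality of the form $C(n)\le\sqrt{\tfrac{2(n-2)}{n-1}}$, or that the sharp constant $C_2(n)$ of \cite{FX3} dominates $\frac{1}{2(n-2)(n-1)}$. This comparison succeeds exactly for $n=4,5$, mirroring the dichotomy already visible in $\varepsilon(n)$ and in Theorem 1.4, where the refined analysis in low dimensions (improved Kato inequality and the position of the Sobolev exponent $\tfrac{2n}{n-2}$ relative to $\tfrac n2$) produces the sharper constant $\frac{n-2}{4(n-1)}$ in place of $\frac1{n-1}$. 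For $n\ge6$ the constant $\frac{1}{2(n-2)(n-1)}$ is simply too weak to exclude non-spherical positive Einstein metrics, which is exactly why Theorem 1.7 must retain ``Einstein'' as a genuine alternative and why the corollary is stated only for $n=4,5$; I would therefore take care to pin down this one inequality, as it is the sole content separating the corollary from Theorem 1.7.
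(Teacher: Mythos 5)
Your overall route is the same as the paper's: the hypothesis is exactly (1.5), so the dichotomy theorem (Theorem 1.8 in the paper's numbering; your ``Theorem 1.7'') leaves only the Einstein case, which must then be forced to be round by a Weyl rigidity argument. The gap is in the numerical heart of that second step --- precisely the step you yourself single out as the sole remaining content. The Weitzenb\"ock formula the paper uses for a compact Einstein manifold (its formula (4.7), quoted from (5) of \cite{FX3}) is
\[
\Delta|W|^2=2|\nabla W|^2-2C_3(n)|W|^3+\frac{4R}{n}|W|^2,
\]
with zeroth-order coefficient $\frac{4R}{n}$, not $\frac{2R}{n-1}$ as in your sketch, and with the sharp cubic constants $C_3(4)=\frac{\sqrt6}{2}$, $C_3(5)=\frac{8\sqrt{10}}{15}$, not the constant $C(n)$ of Lemma 2.1. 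Both discrepancies are fatal to the comparison as you set it up. Inserting the pinching $|W|\le R/\sqrt{2(n-2)(n-1)}$, the Bochner argument closes if and only if the cubic constant is at most $\frac{2\sqrt{2(n-2)(n-1)}}{n}$, i.e. $\sqrt3\approx1.73$ for $n=4$ and $\frac{4\sqrt6}{5}\approx1.96$ for $n=5$; the sharp constants $\frac{\sqrt6}{2}\approx1.22$ and $\frac{8\sqrt{10}}{15}\approx1.69$ do pass this test, which is exactly the paper's verification. Your threshold $\sqrt{2(n-2)/(n-1)}$ (the one produced by the coefficient $\frac{R}{n-1}$) equals $\frac{2}{\sqrt3}\approx1.15$ for $n=4$ and $\frac{\sqrt6}{2}\approx1.22$ for $n=5$, and is \emph{not} met even by the sharp constants; with Lemma 2.1's constant, $C(4)\approx5.6$ misses every relevant threshold by a factor of more than three. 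So the inequality you assert ``succeeds exactly for $n=4,5$'' is false in the form you state it, and the proof does not close.

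Your fallback --- citing the rigidity theorem of \cite{FX3} rather than redoing the computation --- can be made to work, but it needs two ingredients you do not supply. First, Theorem 1.1 of \cite{FX3}, as used in this paper (formulas (3.12), (3.13)), is an integral pinching hypothesis $\bigl(\int_M|W|^{n/2}dV_g\bigr)^{2/n}<\frac{2Y(M,[g])}{nC_2(n)}$; to get it from the pointwise bound you must know $Y(M,[g])=R\,\mathrm{Vol}(M)^{2/n}$ for a compact Einstein manifold, which requires Obata's theorem (an Einstein metric is a Yamabe minimizer in its conformal class) together with the solution of the Yamabe problem. Second, one again needs the numerical comparison, now in the correct form $nC_2(n)<2\sqrt{2(n-2)(n-1)}$, true for $n=4,5$. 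With these two points supplied your citation route is a legitimate alternative to the paper's direct maximum-principle computation; without them, the crux of the corollary remains unproved.
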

%

\section{Proof of Theorem 1.1}
Let $(M^n,g)(n\geq3)$ be an $n$-dimensional  complete Riemannian manifold with the metric $g=\{g_{ij}\}$. Denote by $Ric=\{R_{ij}\}$ and $R$ the Ricci tensor and the scalar curvature, respectively. It is well known that the Riemannian curvature tensor $Rm=\{R_{ijkl}\}$ of $M^n$ can be decomposed into three orthogonal components which have the same symmetries as
$Rm$ $$R_{ijkl}=W_{ijkl}+V_{ijkl}+U_{ijkl},$$ $$V_{ijkl}=\frac{1}{n-2}(\mathring{R}_{ik}g_{jl}-\mathring{R}_{il}g_{jk}+\mathring{R}_{jl}g_{ik}-\mathring{R}_{jk}g_{il}),$$ $$U_{ijkl}=\frac{R}{n(n-1)}(g_{ik}g_{jl}-g_{il}g_{jk}),$$
where $W=\{W_{ijkl}\}$, $V=\{V_{ijkl}\}$ and $U=\{U_{ijkl}\}$ denote the Weyl curvature tensor, the traceless Ricci part and the scalar curvature part, respectively, and $\mathring{Ric}=\{\mathring{R}_{ij}\}=\{R_{ij}-\frac{R}{n}g_{ij}\}$ is the trace-free Ricci tensor.
Denote by $\mathring{Rm}=\{\mathring{R}_{ijkl}\}=\{R_{ijkl}-U_{ijkl}\}$  the trace-free Riemannian curvature tensor.
 In local coordinates, the norm of a $(0,4)$-type tensor $T$ is defined as $$|T|^2=|T_{ijkl}|^2=g^{im}g^{jn}g^{ks}g^{lt}T_{ijkl}T_{mnst}.$$

The following equalities are easily obtained from the properties of Riemannian curvature tensor:
\begin{equation}
g^{ik}\mathring{R}_{ijkl}=\mathring{R}_{jl,}
\end{equation}
\begin{equation}
\mathring{R}_{ijkl}+\mathring{R}_{iljk}+\mathring{R}_{iklj}=0,
\end{equation}
\begin{equation}
\mathring{R}_{ijkl}=\mathring{R}_{klij}=-\mathring{R}_{jikl}=-\mathring{R}_{ijlk},
\end{equation}
\begin{equation}
|\mathring{Rm}|^2=|W|^2+|V|^2=|W|^2+\frac{4}{n-2}|\mathring{Ric}|^2.
\end{equation}
Moreover, under the assumption of constant scalar curvature, we get
\begin{equation}
\nabla_h\mathring{R}_{ijkl}+\nabla_l\mathring{R}_{ijhk}+\nabla_k\mathring{R}_{ijlh}=0,
\end{equation}
and
\begin{eqnarray}
\nabla^lW_{ijkl}&=&\nabla^lR_{ijkl}-\nabla^lV_{ijkl}-\nabla^lU_{ijkl}\nonumber\\
&=&\nabla^l\mathring{R}_{ijkl}-\nabla^lV_{ijkl}\nonumber\\
&=&\frac{n-3}{n-2}\left(\nabla_j\mathring{R}_{ik}-\nabla_i\mathring{R}_{jk}\right)\nonumber \\
&=&\frac{n-3}{n-2}\nabla^l\mathring{R}_{ijkl}.
\end{eqnarray}
Since $n\geq3$, from (2.4) we see that
\begin{eqnarray}
|\mathring{Ric}|^2\leq\frac{n-2}{4}|\mathring{Rm}|^2.
\end{eqnarray}

Let $\Lambda^2(M)$ and $\otimes^2(M)$  denote the space of skew symmetric $2$-tensors  and  $2$-tensors, respectively. It is easy to know that the dimension of $\Lambda^2(M)$ and $\otimes^2(M)$ is $\frac{n(n-1)}{2}$ and  $n^{2}$, respectively. Let $T=\{T_{ijkl}\}$ be a tensor with the same symmetries as the Riemannian curvature tensor. It defines a symmetric operator $T:\Lambda^2(M)\rightarrow\Lambda^2(M)$ by
$$(T\omega)_{kl}:=T_{ijkl}\omega_{ij},$$
with $\omega\in\Lambda^2(M)$. Similarly, it also defines a symmetric operator $T:\otimes^2(M)\rightarrow\otimes^2(M)$ by
$$(T\theta)_{kl}:=T_{kilj}\theta_{ij},$$
with $\theta\in\otimes^2(M)$.

In order to prove Theorem $1.1$, we need the following lemma:
\begin{lemma}
Let $(M^n,g) (n\geq3)$ be an n-dimensional Riemannian manifold with constant scalar curvature, then
\begin{eqnarray}
\mathring{R}^{ijkl}\Delta\mathring{R}_{ijkl}\geq
-C(n)|\mathring{Rm}|^3+2\mathring{R}^{ijkl}\nabla_l\nabla^m\mathring{R}_{ijkm}+A(n)R|\mathring{Rm}|^2,
\end{eqnarray}
where
\[
A(n)=\left\{
\begin{array}{cc}
\frac{1}{n-1}, &\mbox{if $R\geq0$,}\\ \frac{2}{n}, &\mbox{if $R<0$,}
\end{array}\right.
\]
and $C(n)=\frac{4(n^2-2)}{n\sqrt{n^2-1}}+\frac{n^2-n-4}{\sqrt{(n-2)n(n^2-1)}}+\sqrt{\frac{(n-2)(n-1)}{n}}$.
\end{lemma}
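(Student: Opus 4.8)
The plan is to derive a Weitzenb\"ock-type identity for $\mathring{Rm}$ and then estimate the resulting zeroth-order curvature terms. I would start from the second Bianchi identity in the form (2.5), namely $\nabla_h\mathring{R}_{ijkl}+\nabla_l\mathring{R}_{ijhk}+\nabla_k\mathring{R}_{ijlh}=0$, which is available precisely because the scalar curvature is constant. Applying $\nabla^h$ and summing over $h$ gives
$$\Delta\mathring{R}_{ijkl}=-\nabla^h\nabla_l\mathring{R}_{ijhk}-\nabla^h\nabla_k\mathring{R}_{ijlh},$$
where $\Delta=\nabla^h\nabla_h$ is the rough Laplacian. This converts the Laplacian of $\mathring{Rm}$ into second derivatives of its divergence, which is the object appearing in the statement.

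Next I would commute the two covariant derivatives in each term on the right using the Ricci identity, writing $\nabla^h\nabla_l\mathring{R}_{ijhk}=\nabla_l\nabla^h\mathring{R}_{ijhk}+[\nabla^h,\nabla_l]\mathring{R}_{ijhk}$. The commutator produces four curvature contractions of the schematic form $Rm\ast\mathring{Rm}$, one of which collapses to a Ricci contraction $R_l{}^m\mathring{R}_{ijmk}$ since $R^h{}_{lh}{}^m=R_l{}^m$. Contracting the whole identity with $\mathring{R}^{ijkl}$, the divergence pieces $\nabla_l\nabla^h\mathring{R}_{ijhk}$ and $\nabla_k\nabla^h\mathring{R}_{ijlh}$ recombine, after using the antisymmetry $\mathring{R}_{ijhk}=-\mathring{R}_{ijkh}$ and relabeling the pair $(k,l)$, into exactly $2\mathring{R}^{ijkl}\nabla_l\nabla^m\mathring{R}_{ijkm}$, which is the derivative term in the lemma. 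What remains is a sum of purely algebraic quadratic-curvature terms contracted with $\mathring{Rm}$, i.e. cubic expressions in the curvature.

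To handle these zeroth-order terms I would split every Riemann factor as $R_{ijkl}=\mathring{R}_{ijkl}+U_{ijkl}$ and every Ricci factor as $R_{ij}=\mathring{R}_{ij}+\frac{R}{n}g_{ij}$, as introduced at the start of the section. The pieces containing $U$ or the pure-trace part $\frac{R}{n}g$ are proportional to $R|\mathring{Rm}|^2$, and assembling them yields the term $A(n)R|\mathring{Rm}|^2$. The two values of $A(n)$ arise from a leftover contribution proportional to $R|\mathring{Ric}|^2$: to preserve a lower bound one either discards it via $|\mathring{Ric}|^2\ge0$ when $R\ge0$, or bounds it through $|\mathring{Ric}|^2\le\frac{n-2}{4}|\mathring{Rm}|^2$ from (2.7) when $R<0$, which is precisely the recorded case distinction.

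The main obstacle is the sharp lower bound on the remaining fully trace-free cubic terms, $\mathring{Rm}\ast\mathring{Rm}\ast\mathring{Rm}$ together with the $\mathring{Ric}\ast\mathring{Rm}\ast\mathring{Rm}$ pieces, by $-C(n)|\mathring{Rm}|^3$. Here I would use the operator point of view set up just before the lemma, regarding $\mathring{Rm}$ both as a symmetric operator on $\Lambda^2(M)$ via $(\mathring{Rm}\,\omega)_{kl}=\mathring{R}_{ijkl}\omega_{ij}$ and as a symmetric operator on $\otimes^2(M)$ via $(\mathring{Rm}\,\theta)_{kl}=\mathring{R}_{kilj}\theta_{ij}$, and estimate each cubic contraction by operator-norm / eigenvalue bounds for trace-free symmetric operators (an Okumura-type inequality), again together with $|\mathring{Ric}|^2\le\frac{n-2}{4}|\mathring{Rm}|^2$. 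The three summands of $C(n)$, with their denominators $\sqrt{n^2-1}$, $\sqrt{(n-2)n(n^2-1)}$ and $\sqrt{(n-2)(n-1)/n}$, should correspond to the three distinct index-contraction types produced above, each optimized separately; obtaining these constants in sharp rather than merely finite form is the delicate part of the argument.
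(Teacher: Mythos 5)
Your outline follows essentially the same route as the paper's proof: contract the differentiated second Bianchi identity (2.5) with $\mathring{R}^{ijkl}$ to turn $\Delta\mathring{Rm}$ into second derivatives of the divergence, commute derivatives by the Ricci identity, split each curvature factor into its trace-free and pure-trace parts, recombine the fully trace-free cubic terms (via the first Bianchi identity) into trace-of-cube contractions, and estimate these with Huisken's algebraic inequalities for trace-free symmetric operators on $\Lambda^2(M)$ and $\otimes^2(M)$ together with (2.7). Your identification of the three summands of $C(n)$ with the two trace-of-cube terms and the Ricci-contraction term is exactly how the paper obtains (2.11), (2.12) and then (2.13).

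There is, however, a genuine error in your treatment of $A(n)$: the case analysis is backwards, and as written both cases fail. Carrying out the decomposition, the pure-trace pieces contribute $\frac{2R}{n}|\mathring{Rm}|^2-\frac{4R}{n(n-1)}|\mathring{Ric}|^2$, so the leftover term proportional to $R|\mathring{Ric}|^2$ has a \emph{negative} coefficient. Hence when $R\geq0$ this leftover is nonpositive and cannot be discarded (discarding a nonpositive term does not preserve a lower bound); one must instead control it by the upper bound $|\mathring{Ric}|^2\leq\frac{n-2}{4}|\mathring{Rm}|^2$ from (2.7), which gives $\frac{2}{n}-\frac{n-2}{n(n-1)}=\frac{1}{n-1}=A(n)$. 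Conversely, when $R<0$ the leftover is nonnegative and is simply discarded via $|\mathring{Ric}|^2\geq0$, giving $A(n)=\frac{2}{n}$; applying (2.7) in this case, as you propose, only yields an upper bound on a nonnegative quantity and moves the inequality the wrong way (no better constant than $\frac{2}{n}$ is available there, since improving it would require the false reverse inequality $|\mathring{Ric}|^2\geq\frac{n-2}{4}|\mathring{Rm}|^2$). With your assignment you would be claiming $A(n)=\frac{2}{n}$ for $R\geq0$, which does not follow and is not the lemma's constant, and the $R<0$ case would remain unproved. Once the sign of the leftover coefficient is tracked, the fix is immediate and the rest of your argument goes through as in the paper.
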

\begin{remark}
Although Lemma $2.1$ and the explicit coefficient of the term $|\mathring{Rm}|^3$ of (2.8) have been proved in \cite{Ch} and \cite{FX} respectively, for completeness, we also write it out.
\end{remark}
\begin{proof} To simplify the notations, we will compute at an arbitrarily chosen point $p\in M$ in normal coordinates centered at $p$ so that $g_{ij}=\delta_{ij}$. We obtain from (2.3) and (2.5) that
\begin{eqnarray}
\mathring{R}_{ijkl}\Delta\mathring{R}_{ijkl}&=&\mathring{R}_{ijkl}\nabla_m\nabla_m\mathring{R}_{ijkl}\nonumber \\ &=&2\mathring{R}_{ijkl}\nabla_m\nabla_l\mathring{R}_{ijkm}\nonumber \\ &=&2\mathring{R}_{ijkl}(\nabla_l\nabla_m\mathring{R}_{ijkm}+R_{hilm}\mathring{R}_{hjkm} +R_{hjlm}\mathring{R}_{ihkm}+R_{hklm}\mathring{R}_{ijhm}+R_{hmlm}\mathring{R}_{ijkh}),
\end{eqnarray}
where the Ricci identities are used in the last equality of (2.9). By the definition of trace-free Riemannian curvature tensor and (2.1), from (2.9) we get
\begin{eqnarray}
\mathring{R}_{ijkl}\Delta\mathring{R}_{ijkl}&=&2\mathring{R}_{ijkl}\nabla_l\nabla_m\mathring{R}_{ijkm}+2\mathring{R}_{ijkl}(R_{hilm}\mathring{R}_{hjkm}+R_{hjlm}\mathring{R}_{ihkm}+R_{hklm}\mathring{R}_{ijhm})+2R_{hl}\mathring{R}_{ijkl}\mathring{R}_{ijkh}\nonumber \\ &=&2\mathring{R}_{ijkl}\nabla_l\nabla_m\mathring{R}_{ijkm}+2\mathring{R}_{ijkl}(\mathring{R}_{hilm}\mathring{R}_{hjkm}+\mathring{R}_{hjlm}\mathring{R}_{ihkm}+\mathring{R}_{hklm}\mathring{R}_{ijhm})\nonumber\\
&&+2\mathring{R}_{ijkl}\mathring{R}_{ijkh}\mathring{R}_{hl}+\frac{2R}{n}|\mathring{Rm}|^2
+\frac{2R}{n(n-1)}\mathring{R}_{ijkl}(\mathring{R}_{ljki}+\mathring{R}_{ilkj}+\mathring{R}_{ijlk}+\mathring{R}_{jk}\delta_{li}-\mathring{R}_{ik}\delta_{lj})\nonumber\\
&=&2\mathring{R}_{ijkl}\nabla_l\nabla_m\mathring{R}_{ijkm}+2\mathring{R}_{ijkl}(\mathring{R}_{hilm}\mathring{R}_{hjkm}
+\mathring{R}_{hjlm}\mathring{R}_{ihkm}+\mathring{R}_{hklm}\mathring{R}_{ijhm})\nonumber\\
& &+2\mathring{R}_{ijkl}\mathring{R}_{ijkh}\mathring{R}_{hl}-\frac{4R}{n(n-1)}|\mathring{Ric}|^2+\frac{2R}{n}|\mathring{Rm}|^2\nonumber\\
&=&2\mathring{R}_{ijkl}\nabla_l\nabla_m\mathring{R}_{ijkm}-2\left(2\mathring{R}_{ijlk}\mathring{R}_{ihlm}\mathring{R}_{hjmk}+\frac{1}{2}\mathring{R}_{ijkl}\mathring{R}_{ijhm}\mathring{R}_{hmkl}\right)\nonumber\\
& &+2\mathring{R}_{ijkl}\mathring{R}_{ijkh}\mathring{R}_{hl}-\frac{4R}{n(n-1)}|\mathring{Ric}|^2+\frac{2R}{n}|\mathring{Rm}|^2.
\end{eqnarray}
\par We consider $\mathring{Rm}$ as a trace-free symmetric operator on $\Lambda^2(M)$ and $\otimes^2(M)$. By the algebraic inequalities $tr(T^3)\leq\frac{m-2}{\sqrt{m(m-1)}}|T|^3$ for trace-free symmetric $m$-matrices $T$ and $\lambda_i\leq\sqrt{\frac{m-1}{m}}|T|$ for the eigenvalues $\lambda_i$ of $T$ in \cite{H}, we get
\begin{eqnarray}
\left|2\mathring{R}_{ijlk}\mathring{R}_{ihlm}\mathring{R}_{hjmk}+\frac{1}{2}\mathring{R}_{ijkl}\mathring{R}_{ijhm}\mathring{R}_{hmkl}\right|
\leq2|\mathring{R}_{ijlk}\mathring{R}_{ihlm}\mathring{R}_{hjmk}|+\frac{1}{2}|\mathring{R}_{ijkl}\mathring{R}_{ijhm}\mathring{R}_{hmkl}|\nonumber\\
\leq\left(\frac{2(n^2-2)}{n{\sqrt{n^2-1}}}+\frac{n^2-n-4}{2\sqrt{(n-2)n(n^2-1)}}\right)|\mathring{Rm}|^3,
\end{eqnarray}
and
\begin{eqnarray}
|\mathring{R}_{ijkl}\mathring{R}_{ijkh}\mathring{R}_{hl}|\leq\sqrt{\frac{n-1}{n}}|\mathring{Ric}||\mathring{Rm}|^2.
\end{eqnarray}
Combining with (2.7), (2.10), (2.11) and (2.12), we get
\begin{eqnarray}
\mathring{R}_{ijkl}\Delta\mathring{R}_{ijkl}&\geq&
-\left(\sqrt{\frac{(n-1)(n-2)}{n}}+\frac{4(n^2-2)}{n\sqrt{n^2-1}}+\frac{n^2-n-4}{\sqrt{(n-2)n(n^2-1)}}\right)|\mathring{Rm}|^3\nonumber\\
& &+2\mathring{R}^{ijkl}\nabla_l\nabla^m\mathring{R}_{ijkm}+A(n)R|\mathring{Rm}|^2.
\end{eqnarray}
\end{proof}
\textbf{Proof of Theorem $1.1$.}
For simplicity of natation, we denote by $(\delta\mathring{Rm})_{ijk}=\nabla^l\mathring{R}_{ijkl}$ the divergence of the trace-free Riemannian curvature tensor and $u=|\mathring{Rm}|$. By the Kato inequality $|\nabla\mathring{Rm}|^2\geq|\nabla|\mathring{Rm}||^2$, we get
\begin{eqnarray}
\mathring{R}^{ijkl}\Delta\mathring{R}_{ijkl}
&\leq&\mathring{R}^{ijkl}\Delta\mathring{R}_{ijkl}+|\nabla\mathring{Rm}|^2-|\nabla|\mathring{Rm}||^2\nonumber\\
&=&\frac{1}{2}\Delta|\mathring{Rm}|^2-|\nabla|\mathring{Rm}||^2\nonumber\\
&=&|\mathring{Rm}|\Delta|\mathring{Rm}|=u\Delta u,
\end{eqnarray}
which together with Lemma $2.1$ and integrating on $M^n$ give
$$\int_{M}u\Delta udV_g\geq-C(n)\int_{M}u^3dV_g+2\int_{M}\mathring{R}^{ijkl}\nabla_l\nabla^m\mathring{R}_{ijkm}dV_g+\frac{R}{n-1}\int_{M}u^2dV_g.$$
Moreover, using the Stokes's theorem, we get
\begin{eqnarray}
\int_{M}|\nabla u|^2dV_g\leq C(n)\int_{M}u^3dV_g+2\int_{M}|\delta\mathring{Rm}|^2dV_g-\frac{R}{n-1}\int_{M}u^2dV_g.
\end{eqnarray}
Since $M^n$ is Bach-flat, we have
$$B_{ij}=\frac{1}{n-3}\nabla^k\nabla^lW_{ikjl}+\frac{1}{n-2}R^{kl}W_{ikjl}=0.$$
Multiplying the above equality by $\mathring{R}^{ij}$ and integrating on $M^n$ give
\begin{eqnarray}
0&=&\frac{1}{n-3}\int_{M}\mathring{R}^{ij}\nabla^k\nabla^lW_{ikjl}dV_g+\frac{1}{n-2}\int_{M}\mathring{R}^{ij}R^{kl}W_{ikjl}dV_g\nonumber\\
&=&-\frac{1}{n-2}\int_{M}\nabla^k\mathring{R}^{ij}\nabla^l\mathring{R}_{ikjl}dV_g+\frac{1}{n-2}\int_{M}\mathring{R}^{ij}\mathring{R}^{kl}W_{ikjl}dV_g\nonumber\\
&=&-\frac{1}{n-2}\int_{M}\frac{1}{2}\left(\nabla^k\mathring{R}^{ij}-\nabla^i\mathring{R}^{kj}\right)\nabla^l\mathring{R}_{ikjl}dV_g+\frac{1}{n-2}\int_{M}\mathring{R}^{ij}\mathring{R}^{kl}W_{ikjl}dV_g\nonumber\\
&=&-\frac{1}{2(n-2)}\int_{M}|\delta\mathring{Rm}|^2dV_g+\frac{1}{n-2}\int_{M}\mathring{R}^{ij}\mathring{R}^{kl}W_{ikjl}dV_g,
\end{eqnarray}
where (2.6) and the second Bianchi identities are used in the second line and the third line of (2.16) respectively. Using (2.7) and the Huisken inequality ( see Lemma 3.4 of  \cite{H})
$$|\mathring{R}^{ik}\mathring{R}^{jl}W_{ijkl}|\leq\sqrt{\frac{n-2}{2(n-1)}}|W||\mathring{Ric}|^2,$$
we have
\begin{eqnarray}
\int_{M}|\delta\mathring{Rm}|^2dV_g\leq\frac{n-2}{2}\sqrt{\frac{n-2}{2(n-1)}}\int_{M}u^3dV_g.
\end{eqnarray}
Combining with (2.15) and (2.17), we obtain
\begin{eqnarray}
\int_{M}|\nabla u|^2dV_g\leq E(n)\int_{M}u^3dV_g-\frac{R}{n-1}\int_{M}u^2dV_g,
\end{eqnarray}
where $E(n)=C(n)+\sqrt{\frac{(n-2)^3}{2(n-1)}}$. From (2.18), using Young's inequality and the H\"{o}lder inequality, we get
\begin{eqnarray}
\int_{M}|\nabla u|^2dV_g
&\leq&\frac{nE(n)}{2p}\epsilon^{-\frac{2p-n}{n}}\int_{M}u^{2+\frac{2p}{n}}dV_g+\left(\frac{(2p-n)\epsilon E(n)}{2p}-\frac{R}{n-1}\right)\int_{M}u^2dV_g\nonumber\\
&\leq&\frac{nE(n)}{2p}\epsilon^{-\frac{2p-n}{n}}\left(\int_{M}u^{\frac{2n}{n-2}}dV_g\right)^{\frac{n-2}{n}}\left(\int_{M}u^pdV_g\right)^{\frac{2}{n}}\\
& &+\left(\frac{(2p-n)\epsilon E(n)}{2p}-\frac{R}{n-1}\right)\int_{M}u^2dV_g\nonumber,
\end{eqnarray}
where $\epsilon$ is a positive constant. By the definition of Yamabe constant $Y(M,[g])$, we get
\begin{eqnarray}
\frac{n-2}{4(n-1)}Y(M,[g])\left(\int_{M}u^{\frac{2n}{n-2}}dV_g\right)^{\frac{n-2}{n}}
&\leq&\int_{M}|\nabla u|^2dV_g+\frac{(n-2)R}{4(n-1)}\int_{M}u^2dV_g\nonumber\\
&\leq&(1+\eta)\int_{M}|\nabla u|^2dV_g+\frac{(n-2)R}{4(n-1)}\int_{M}u^2dV_g,
\end{eqnarray}
where $\eta\geq0$ is a constant. Substituting (2.19) into (2.20), we conclude that
\begin{eqnarray}
\left\{\frac{n-2}{4(n-1)}Y(M,[g])-(1+\eta)\frac{nE(n)}{2p}\epsilon^{-\frac{2p-n}{n}}\left(\int_{M}u^pdV_g\right)^{\frac{2}{n}}\right\}\left(\int_{M}u^{\frac{2n}{n-2}}dV_g\right)^{\frac{n-2}{n}}\nonumber\\
\leq\left\{\frac{(n-2)R}{4(n-1)}+(1+\eta)\left(\frac{(2p-n)\epsilon E(n)}{2p}-\frac{R}{n-1}\right)\right\}\int_{M}u^2dV_g.
\end{eqnarray}

How we select $(\epsilon, \eta)$ to maximize $\left(\int_{M}u^pdV_g\right)^{\frac{2}{n}}$ in (2.21)  is equivalent to a problem of finding $(\epsilon, \eta)$ on a domain $\mathcal{D}$ which minimizes a function
$$F(\epsilon, \eta):=\frac{nE(n)}{2p}(1+\eta)\epsilon^{-\frac{2p-n}{n}}.$$
Here,  the domain $\mathcal{D}$ consists of points $(\epsilon, \eta)$ which satisfies inequalities
$$G(\epsilon, \eta):=\frac{(n-2)R}{4(n-1)}+(1+\eta)\left(\frac{(2p-n)\epsilon E(n)}{2p}-\frac{R}{n-1}\right)\leq0, \ \epsilon>0, \ \eta\geq0.$$
In the case of $p=\frac{n}{2}$, since we have \\
$$
F(\epsilon, \eta)=E(n)(1+\eta)  \ \
\begin{array}{cc}
&\mbox{and}
\end{array}
\ \ \ \
G(\epsilon, \eta)=\frac{R}{n-1}\left(\frac{n-6}{4}-\eta\right),
$$
we can set
$$
\eta=\left\{
\begin{array}{cc}
\frac{n-6}{4}, &\mbox{if $n\geq6$,}\\
0, &\mbox{if $n=4,5$}
\end{array}
\right.
, \ \
\begin{array}{cc}
&\mbox{$\epsilon =$ (any positive number).}
\end{array}
$$
In the case of $p>\frac{n}{2}$. In order to minimize $F(\epsilon, \eta)$ and $G(\epsilon, \eta)=0$, we can set
$$
\eta=\left\{
\begin{array}{cc}
\frac{(n-2)p-2n}{2n}, &\mbox{if $n=4,5$ and $p\geq\frac{2n}{n-2}$ or $n\geq6$}\\
0, &\mbox{if $n=4,5$ and $\frac n2<p<\frac{2n}{n-2}$}
\end{array}
\right.
,$$
$$
\epsilon =\left\{
\begin{array}{cc}
\frac{R}{(n-1)E(n)}, &\mbox{if $n=4,5$ and $p\geq\frac{2n}{n-2}$ or $n\geq6$}\\
\frac{p(6-n)R}{2(n-1)(2p-n)E(n)}, &\mbox{if $n=4,5$ and $\frac n2<p<\frac{2n}{n-2}$.}
\end{array}
\right.
$$
 In conclusion,
we can choose
$$\left(\int_{M}u^pdV_g\right)^{\frac{1}{p}}<\varepsilon(n)Y(M,[g])^\frac{n}{2p}R^{1-\frac{n}{2p}}$$
such that (2.21) implies $\left(\int_{M}u^{\frac{2n}{n-2}}dV_g\right)^{\frac{n-2}{n}}=0$, i.e., $\mathring{Rm}=0$. Hence $(M^n,g)$ is isometric to a quotient of the round $\mathbb{S}^n$.
\section{Proof of Theorem 1.4}
Now, we compute the Laplacian of $|\mathring{Ric}|^2$.
\begin{lemma}
Let $(M^n,g) (n\geq4)$ be a complete Bach-flat $n$-manifold with constant scalar curvature, then
\begin{eqnarray}
\Delta|\mathring{Ric}|^2=2|\nabla\mathring{Ric}|^2-4\mathring{R}_{ij}\mathring{R}_{kl}W_{ikjl}+\frac{2n}{n-2}\mathring{R}_{ij}\mathring{R}_{jk}\mathring{R}_{ik}
+\frac{2R}{n-1}|\mathring{Ric}|^2.
\end{eqnarray}
\end{lemma}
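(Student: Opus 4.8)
The plan is to run a Bochner (Weitzenb\"ock) argument for the traceless Ricci tensor and to feed in the Bach-flat hypothesis through the divergence of the Weyl tensor. First I would write, for the symmetric $2$-tensor $\mathring{Ric}$,
$$\frac12\Delta|\mathring{Ric}|^2=|\nabla\mathring{Ric}|^2+\mathring{R}^{ij}\Delta\mathring{R}_{ij},$$
and observe that since $R$ is constant we have $\Delta\mathring{R}_{ij}=\Delta R_{ij}$, so the whole problem reduces to computing the curvature quantity $\mathring{R}^{ij}\Delta R_{ij}$. Comparing with the desired identity, it then suffices to show $\mathring{R}^{ij}\Delta R_{ij}=-2\mathring{R}_{ij}\mathring{R}_{kl}W_{ikjl}+\frac{n}{n-2}\mathring{R}_{ij}\mathring{R}_{jk}\mathring{R}_{ik}+\frac{R}{n-1}|\mathring{Ric}|^2$, after which doubling through the Bochner identity produces the stated formula.

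Next I would manufacture a pointwise identity for $\Delta R_{ij}$ from the Weyl divergence, which is exactly where Bach-flatness enters. Starting from the contracted second Bianchi identity in the form (2.6), $\nabla^lW_{ikjl}=\frac{n-3}{n-2}(\nabla_kR_{ij}-\nabla_iR_{kj})$, a further divergence gives $\nabla^k\nabla^lW_{ikjl}=\frac{n-3}{n-2}(\Delta R_{ij}-\nabla^k\nabla_iR_{kj})$. Because $R$ is constant, the contracted Bianchi identity yields $\nabla^kR_{kj}=\frac12\nabla_jR=0$, so $\nabla^k\nabla_iR_{kj}$ is a pure commutator; applying the Ricci identity rewrites it as the quadratic curvature expression $R_{ie}R_{ej}+R_{kije}R_{ke}$. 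Combining this with the Bach-flat condition written as $\nabla^k\nabla^lW_{ikjl}=-\frac{n-3}{n-2}R^{kl}W_{ikjl}$ and cancelling the factor $\frac{n-3}{n-2}$ (legitimate for $n\ge4$) gives the clean identity $\Delta R_{ij}=R_{ie}R_{ej}+R_{kije}R_{ke}-R^{kl}W_{ikjl}$.

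Finally I would contract this identity with $\mathring{R}^{ij}$ and reduce every term to the three tensor types via $R_{ijkl}=W_{ijkl}+V_{ijkl}+U_{ijkl}$, repeatedly using that $W$ is totally trace-free, that $\mathring{Ric}$ is trace-free, and the algebraic relations (2.1)--(2.4). The term $\mathring{R}^{ij}R_{ie}R_{ej}$ splits into the cubic trace $\mathring{R}_{ij}\mathring{R}_{jk}\mathring{R}_{ik}$ plus a multiple of $R|\mathring{Ric}|^2$; the term $\mathring{R}^{ij}R_{kije}R_{ke}$ contributes a Weyl piece together with (through $V$ and $U$) a further cubic trace and further multiples of $R|\mathring{Ric}|^2$; and $-\mathring{R}^{ij}R^{kl}W_{ikjl}$ collapses, by trace-freeness, to $-\mathring{R}_{ij}\mathring{R}_{kl}W_{ikjl}$. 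I expect the main obstacle to be the bookkeeping at this last stage: one must use the Weyl symmetries to bring $\mathring{R}^{ij}\mathring{R}^{ke}W_{kije}$ into the canonical form $-\mathring{R}_{ij}\mathring{R}_{kl}W_{ikjl}$, and then verify that the two Weyl contributions (one from the commutator term $R_{kije}R_{ke}$, one from the Bach term $R^{kl}W_{ikjl}$) reinforce to give coefficient $-2$ rather than cancelling, that the cubic pieces combine as $1+\frac{2}{n-2}=\frac{n}{n-2}$, and that the three separate $R|\mathring{Ric}|^2$ coefficients add up to $\frac{1}{n-1}$. Getting every sign in the Ricci identity and in these Weyl re-symmetrizations correct is the delicate part; the remaining contractions are routine, and assembling them via the Bochner identity yields the claimed formula.
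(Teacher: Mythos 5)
Your proposal is correct and follows essentially the same route as the paper's proof: both run the Bochner formula for $\mathring{Ric}$, feed in the contracted second Bianchi identity (2.6), use Bach-flatness to replace $\nabla^k\nabla^lW_{ikjl}$ by $-\frac{n-3}{n-2}R^{kl}W_{ikjl}$, compute the remaining commutator via the Ricci identity, and finish with the orthogonal decomposition of the curvature tensor. The only difference is packaging — you extract a pointwise formula for $\Delta R_{ij}$ before contracting with $\mathring{R}^{ij}$, whereas the paper contracts first and re-derives the divergence relation $\nabla^k\nabla^l\mathring{R}_{ikjl}=\frac{n-2}{n-3}\nabla^k\nabla^lW_{ikjl}$ by decomposing $\mathring{Rm}=W+V$ and solving a linear equation — and all the coefficients you anticipate (the two Weyl pieces reinforcing to $-2$, the cubic terms giving $1+\frac{2}{n-2}=\frac{n}{n-2}$, and the scalar pieces summing to $\frac{1}{n-1}$) are exactly what the computation yields.
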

\begin{remark}
Although Lemma $3.1$ has been proved in \cite{CGY}, for completeness, we also write it out.\end{remark}
\begin{proof}We obtain from (2.3) and (2.5) that
\begin{eqnarray}
\Delta|\mathring{Ric}|^2&=&2|\nabla\mathring{Ric}|^2+2\mathring{R}_{ij}\nabla_k\nabla_k\mathring{R}_{ij}\nonumber \\
&=&2|\nabla\mathring{Ric}|^2+2\mathring{R}_{ij}\nabla_k(\nabla_j\mathring{R}_{ik}-\nabla_l\mathring{R}_{ilkj})\nonumber \\
&=&2|\nabla\mathring{Ric}|^2+2\mathring{R}_{ij}\nabla_k\nabla_j\mathring{R}_{ik}+2\mathring{R}_{ij}\nabla_k\nabla_l\mathring{R}_{ikjl}.
\end{eqnarray}
Since the scalar curvature is constant, by the Ricci identities, we get
\begin{eqnarray}
\mathring{R}_{ij}\nabla_k\nabla_j\mathring{R}_{ik}&=&\mathring{R}_{ij}(\nabla_j\nabla_k\mathring{R}_{ik}+\mathring{R}_{hk}R_{hijk}+\mathring{R}_{ih}R_{hkjk})\nonumber\\
&=&\mathring{R}_{ij}\mathring{R}_{hk}R_{hijk}+\mathring{R}_{ij}\mathring{R}_{ih}R_{hj}\nonumber\\
&=&\mathring{R}_{ij}\mathring{R}_{hk}[W_{hijk}+\frac{1}{n-2}(\mathring{R}_{ik}\delta_{hj}+\mathring{R}_{hj}\delta_{ik}-\mathring{R}_{ij}\delta_{hk}-\mathring{R}_{hk}\delta_{ij})\nonumber\\
& & +\frac{R}{n(n-1)}(\delta_{ik}\delta_{jh}-\delta_{ij}\delta_{hk})]+\mathring{R}_{ij}\mathring{R}_{ih}\mathring{R}_{hj}+\frac{R}{n}|\mathring{Ric}|^2\nonumber\\
&=&\mathring{R}_{ij}\mathring{R}_{hk}W_{hijk}+\frac{n}{n-2}\mathring{R}_{ij}\mathring{R}_{jk}\mathring{R}_{ik}+\frac{R}{n-1}|\mathring{Ric}|^2,
\end{eqnarray}
and
\begin{eqnarray*}
\mathring{R}_{ij}\nabla_k\nabla_l\mathring{R}_{ikjl}
&=&\mathring{R}_{ij}\nabla_k\nabla_lW_{ikjl}+\frac{1}{n-2}\mathring{R}_{ij}\nabla_k\nabla_l(\mathring{R}_{ij}\delta_{kl}+\mathring{R}_{kl}\delta_{ij}-\mathring{R}_{il}\delta_{jk}-\mathring{R}_{jk}\delta_{il})
\nonumber\\
&=&\mathring{R}_{ij}\nabla_k\nabla_lW_{ikjl}+\frac{1}{n-2}(\mathring{R}_{ij}\Delta\mathring{R}_{ij}-\mathring{R}_{ij}\nabla_j\nabla_l\mathring{R}_{il}-\mathring{R}_{ij}\nabla_k\nabla_i\mathring{R}_{kj})
\nonumber\\
&=&\mathring{R}_{ij}\nabla_k\nabla_lW_{ikjl}+\frac{1}{n-2}\left[\mathring{R}_{ij}\Delta\mathring{R}_{ij}-\mathring{R}_{ij}\nabla_k(\nabla_k\mathring{R}_{ij}+\nabla_l\mathring{R}_{jlki})\right]\nonumber\\
&=&\mathring{R}_{ij}\nabla_k\nabla_lW_{ikjl}+\frac{1}{n-2}\mathring{R}_{ij}\nabla_k\nabla_l\mathring{R}_{ikjl}.
\end{eqnarray*}
Since $M^n$ is Bach-flat, we get from the above equation that
\begin{eqnarray}
\mathring{R}_{ij}\nabla_k\nabla_l\mathring{R}_{ikjl}=-\mathring{R}_{ij}\mathring{R}_{kl}W_{ikjl}.
\end{eqnarray}
Combining with (3.2), (3.3) and (3.4), we obtain
$$\Delta|\mathring{Ric}|^2=2|\nabla\mathring{Ric}|^2-4\mathring{R}_{ij}\mathring{R}_{kl}W_{ikjl}+\frac{2n}{n-2}\mathring{R}_{ij}\mathring{R}_{jk}\mathring{R}_{ik}
+\frac{2R}{n-1}|\mathring{Ric}|^2.$$
This completes the proof of Lemma $3.1$.
\end{proof}
\begin{lemma}
On every $n$-dimensional Riemannian manifold, the following estimate holds
$$\left|-W_{ijkl}\mathring{R}_{ik}\mathring{R}_{jl}+\frac{n}{2(n-2)}\mathring{R}_{ij}\mathring{R}_{jk}\mathring{R}_{ik}\right|
\leq\sqrt{\frac{n-2}{2(n-1)}}|\mathring{Ric}|^2\left(|W|^2+\frac{n}{2(n-2)}|\mathring{Ric}|^2\right)^{\frac{1}{2}}.$$
\end{lemma}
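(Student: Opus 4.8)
The plan is to recognize the quantity under the square root on the right-hand side as the squared norm of a single curvature-type tensor, and then to realize the left-hand side as an inner product against an auxiliary tensor, so that the whole estimate collapses to one application of the Cauchy--Schwarz inequality. Set $a=\frac{\sqrt{n}}{2\sqrt{2}(n-2)}$ and $T=W+a\,\mathring{Ric}\circledwedge g$. Because the decomposition $Rm=W+V+U$ is orthogonal, $W$ is orthogonal to every tensor of the form $h\circledwedge g$; together with the elementary identity $|\mathring{Ric}\circledwedge g|^2=4(n-2)|\mathring{Ric}|^2$ (valid since $\mathring{Ric}$ is trace-free) this gives $|T|^2=|W|^2+4a^2(n-2)|\mathring{Ric}|^2=|W|^2+\frac{n}{2(n-2)}|\mathring{Ric}|^2$, exactly the expression appearing on the right. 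The target is therefore to produce a tensor $Z$, built only from $\mathring{Ric}$ and $g$, with $\langle T,Z\rangle$ equal to the scalar on the left and with $|Z|=\sqrt{\frac{n-2}{2(n-1)}}\,|\mathring{Ric}|^2$.

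Next I would pin down $Z$ by matching the two algebraically independent pieces of $T$. Writing $W_{ijkl}\mathring{R}_{ik}\mathring{R}_{jl}=\frac14\langle W,\mathring{Ric}\circledwedge\mathring{Ric}\rangle$ (using the antisymmetry $W_{ijkl}=-W_{ijlk}$), the requirement $\langle W,Z\rangle=-W_{ijkl}\mathring{R}_{ik}\mathring{R}_{jl}$ for all Weyl-type $W$ forces the Weyl part of $Z$ to be $-\frac14$ times the Weyl part of $\mathring{Ric}\circledwedge\mathring{Ric}$. The remaining contribution $a\langle \mathring{Ric}\circledwedge g,Z\rangle$ must reproduce $\frac{n}{2(n-2)}\mathring{R}_{ij}\mathring{R}_{jk}\mathring{R}_{ik}$; since $\mathring{Ric}\circledwedge g$ is orthogonal to the Weyl summand, this only constrains the $h\circledwedge g$ part of $Z$, and the natural choice is $a\,S\circledwedge g$ with $S=\mathring{Ric}^2-\frac{|\mathring{Ric}|^2}{n}g$ the trace-free part of the matrix square $(\mathring{Ric}^2)_{ij}=\mathring{R}_{ik}\mathring{R}_{kj}$. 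Using $\langle h\circledwedge g,k\circledwedge g\rangle=4\big((n-2)\langle h,k\rangle+\operatorname{tr}h\operatorname{tr}k\big)$ and $4a^2(n-2)=\frac{n}{2(n-2)}$, I would then verify that
\[
Z=-\tfrac14(\mathring{Ric}\circledwedge\mathring{Ric})_W+a\,S\circledwedge g
\]
satisfies $\langle T,Z\rangle=-W_{ijkl}\mathring{R}_{ik}\mathring{R}_{jl}+\frac{n}{2(n-2)}\mathring{R}_{ij}\mathring{R}_{jk}\mathring{R}_{ik}$, the four cross terms either reproducing the wanted pieces or vanishing by orthogonality.

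The crux is the computation of $|Z|^2$, which by orthogonality of the two summands equals $\frac{1}{16}|(\mathring{Ric}\circledwedge\mathring{Ric})_W|^2+4a^2(n-2)|S|^2$. I would evaluate the Weyl part by subtracting the Ricci and scalar parts of $\mathring{Ric}\circledwedge\mathring{Ric}$ (whose Ricci contraction is $-2\mathring{Ric}^2$ and whose scalar is $-2|\mathring{Ric}|^2$) from $|\mathring{Ric}\circledwedge\mathring{Ric}|^2$, and express everything through the power sums $p_2=|\mathring{Ric}|^2$ and $p_4=\operatorname{tr}(\mathring{Ric}^4)$. The decisive point, which I expect to be the main obstacle to get right, is that the $p_4$ contributions from $\frac{1}{16}|(\mathring{Ric}\circledwedge\mathring{Ric})_W|^2$ and from $4a^2(n-2)|S|^2$ cancel exactly, leaving $|Z|^2=\frac{n-2}{2(n-1)}p_2^2=\frac{n-2}{2(n-1)}|\mathring{Ric}|^4$. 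This cancellation is precisely what makes $|Z|$ a pure multiple of $|\mathring{Ric}|^2$ and the estimate sharp, with equality when $T$ and $Z$ are proportional (as for $\mathbb{S}^1\times\mathbb{S}^{n-1}$, where $W=0$). Feeding $|Z|=\sqrt{\frac{n-2}{2(n-1)}}\,|\mathring{Ric}|^2$ into $|\langle T,Z\rangle|\le|T|\,|Z|$ then yields the claimed inequality at once.
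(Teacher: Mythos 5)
Your proposal is correct and is essentially the paper's own proof in different packaging: your $T=W+\frac{\sqrt{n}}{2\sqrt{2}(n-2)}\mathring{Ric}\circledwedge g$ is exactly the rescaled first factor $W+\frac{\sqrt{2n}}{4(n-2)}\mathring{Ric}\circledwedge g$ that the paper feeds into Cauchy--Schwarz, and your $Z$ is exactly $-\frac{1}{4}$ times the paper's rescaled second factor $T_{\mathrm{paper}}+\sqrt{n/2}\,V'$, since the paper's $V'$ equals $-\frac{2}{n-2}S\circledwedge g$. In particular, the orthogonal decomposition of $\mathring{Ric}\circledwedge\mathring{Ric}$ into Weyl, Ricci and scalar parts, the complementary rescaling of the two orthogonal components so that the inner product is preserved, and the decisive cancellation of the $\operatorname{tr}(\mathring{Ric}^4)$ terms leaving $\frac{n-2}{2(n-1)}|\mathring{Ric}|^4$ are all identical to the steps in the paper's proof.
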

\begin{remark}
We follow these proofs of Proposition 2.1 in \cite{C} and Lemma 4.7 in \cite{Bo} to prove this lemma. For completeness, we also write it out.
In general, according to the proof of Lemma $3.3$, we can obtain
$$\left|-W_{ijkl}\mathring{R}_{ik}\mathring{R}_{jl}+K\mathring{R}_{ij}\mathring{R}_{jk}\mathring{R}_{ik}\right|
\leq\sqrt{\frac{n-2}{2(n-1)}}|\mathring{Ric}|^2\left(|W|^2+\frac{2(n-2)K^2}{n}|\mathring{Ric}|^2\right)^{\frac{1}{2}},$$
where $K$ is a constant.
\end{remark}
\begin{proof}
First of all we have
$$(\mathring{Ric}\circledwedge g)_{ijkl}=\mathring{R}_{ik}g_{jl}-\mathring{R}_{il}g_{jk}+\mathring{R}_{jl}g_{ik}-\mathring{R}_{jk}g_{il},$$
$$(\mathring{Ric}\circledwedge \mathring{Ric})_{ijkl}=2(\mathring{R}_{ik}\mathring{R}_{jl}-\mathring{R}_{il}\mathring{R}_{jk}),$$
where $\circledwedge$ denotes the Kulkarni-Nomizu product.
An easy computation shows
$$W_{ijkl}\mathring{R}_{ik}\mathring{R}_{jl}=\frac{1}{4}W_{ijkl}(\mathring{Ric}\circledwedge \mathring{Ric})_{ijkl},$$
$$\mathring{R}_{ij}\mathring{R}_{jk}\mathring{R}_{ik}=-\frac{1}{8}(\mathring{Ric}\circledwedge g)_{ijkl}(\mathring{Ric}\circledwedge \mathring{Ric})_{ijkl}.$$
Hence we get the following identity
\begin{eqnarray}
-W_{ijkl}\mathring{R}_{ik}\mathring{R}_{jl}+\frac{n}{2(n-2)}\mathring{R}_{ij}\mathring{R}_{jk}\mathring{R}_{ik}=-\frac{1}{4}\left(W+\frac{n}{4(n-2)}\mathring{Ric}\circledwedge g\right)_{ijkl}(\mathring{Ric}\circledwedge \mathring{Ric})_{ijkl}.
\end{eqnarray}
Since $\mathring{Ric}\circledwedge \mathring{Ric}$ has the same symmetries of the Riemannian curvature tensor, it can be orthogonally decomposed as
$$\mathring{Ric}\circledwedge \mathring{Ric}=T+V'+U',$$
where $T$ is totally trace-free and
$$V'_{ijkl}=-\frac{2}{n-2}\left(\mathring{Ric}^2\circledwedge g\right)_{ijkl}+\frac{2}{n(n-2)}|\mathring{Ric}|^2(g\circledwedge g)_{ijkl},$$
$$U'_{ijkl}=-\frac{1}{n(n-1)}|\mathring{Ric}|^2(g\circledwedge g)_{ijkl},$$
where $\left(\mathring{Ric}^2\right)_{ik}=\mathring{R}_{ip}\mathring{R}_{kp}$. Taking the squared norm one obtains
$$|\mathring{Ric}\circledwedge \mathring{Ric}|^2=8|\mathring{Ric}|^4-8|\mathring{Ric}^2|^2,$$
$$|V'|^2=\frac{16}{n-2}|\mathring{Ric}^2|^2-\frac{16}{n(n-2)}|\mathring{Ric}|^4,$$
$$|U'|^2=\frac{8}{n(n-1)}|\mathring{Ric}|^4.$$
In particular, one has
$$|T|^2+\frac{n}{2}|V'|^2=|\mathring{Ric}\circledwedge \mathring{Ric}|^2+\frac{n-2}{2}|V'|^2-|U'|^2=\frac{8(n-2)}{n-1}|\mathring{Ric}|^4.$$
We now estimate the right hand side of (3.5). Using the fact that $W$ and $T$ are totally trace-free and the Cauchy-Schwarz inequality we obtain
\begin{eqnarray*}
\left|\left(W+\frac{n}{4(n-2)}\mathring{Ric}\circledwedge g\right)_{ijkl}(\mathring{Ric}\circledwedge \mathring{Ric})_{ijkl}\right|^2
&=&\left|\left(W+\frac{n}{4(n-2)}\mathring{Ric}\circledwedge g\right)_{ijkl}(T+V')_{ijkl}\right|^2\\
&=&\left|\left(W+\frac{\sqrt{2n}}{4(n-2)}\mathring{Ric}\circledwedge g\right)_{ijkl}\left(T+\sqrt{\frac{n}{2}}V'\right)_{ijkl}\right|^2\\
&\leq&\left|W+\frac{\sqrt{2n}}{4(n-2)}\mathring{Ric}\circledwedge g\right|^2\left(|T|^2+\frac{n}{2}|V'|^2\right)\\
&=&\frac{8(n-2)}{n-1}|\mathring{Ric}|^4\left(|W|^2+\frac{n}{2(n-2)}|\mathring{Ric}|^2\right).
\end{eqnarray*}
This estimate together with (3.5) concludes this proof.
\end{proof}

\textbf{Proof of Theorem $1.4$.} By Lemmas $3.1$ and $3.3$ and the Kato inequality $|\nabla\mathring{Ric}|^2\geq|\nabla|\mathring{Ric}||^2$, we get
\begin{eqnarray}
|\mathring{Ric}|\Delta|\mathring{Ric}|\geq
-\sqrt{\frac{2(n-2)}{n-1}}|\mathring{Ric}|^2\left(|W|^2+\frac{n}{2(n-2)}|\mathring{Ric}|^2\right)^{\frac{1}{2}}+\frac{R}{n-1}|\mathring{Ric}|^2.
\end{eqnarray}
Set $u=|\mathring{Ric}|$. By (3.6), we compute
\begin{eqnarray}
u^\alpha\Delta u^\alpha&=&u^\alpha\left(\alpha(\alpha-1)u^{\alpha-2}|\nabla u|^2+\alpha u^{\alpha-1}\Delta u\right)\nonumber \\
&=&\frac{\alpha-1}{\alpha}|\nabla u^\alpha|^2+\alpha u^{2\alpha-2}u\Delta u\nonumber \\
&\geq&\frac{\alpha-1}{\alpha}|\nabla u^\alpha|^2-\alpha\sqrt{\frac{2(n-2)}{n-1}}\left(|W|^2+\frac{n}{2(n-2)}u^2\right)^{\frac{1}{2}}u^{2\alpha}+\frac{\alpha R}{n-1}u^{2\alpha}.
\end{eqnarray}
Integrating (3.7) on $M^n$ and using Stoke's theorem, we have
\begin{eqnarray}
0\geq\left(2-\frac{1}{\alpha}\right)\int_{M}|\nabla u^\alpha|^2dV_g-\alpha\sqrt{\frac{2(n-2)}{n-1}}\int_{M}\left(|W|^2+\frac{n}{2(n-2)}u^2\right)^{\frac{1}{2}}u^{2\alpha}dV_g
+\frac{\alpha R}{n-1}\int_{M}u^{2\alpha}dV_g.
\end{eqnarray}
For $2-\frac{1}{\alpha}>0$, by the definition of Yamabe constant and H\"{o}ider inequality, we obtain from (3.8) that
\begin{eqnarray}
0&\geq&\left\{\left(2-\frac{1}{\alpha}\right)\frac{n-2}{4(n-1)}Y(M,[g])
-\alpha\sqrt{\frac{2(n-2)}{n-1}}\left(\int_{M}\left(|W|^2+\frac{n}{2(n-2)}u^2\right)^{\frac{n}{4}}dV_g\right)^{\frac{2}{n}}\right\}\left(\int_{M}u^{\frac{2n\alpha}{n-2}}dV_g\right)^{\frac{n-2}{n}}\nonumber \\
& &+\frac{4\alpha^2-2(n-2)\alpha+n-2}{4\alpha(n-1)}R\int_{M}u^{2\alpha}dV_g.
\end{eqnarray}
\textbf{Case 1.} when $n\geq6$, taking $\alpha=\frac{(n-2)\left(1+\sqrt{1-\frac{4}{n-2}}\right)}{4}$, from (3.9), we get
\begin{eqnarray}
0&\geq&\left\{\frac{Y(M,[g])}{\sqrt{2(n-1)(n-2)}}-\left(\int_{M}\left(|W|^2+\frac{n}{2(n-2)}u^2\right)^{\frac{n}{4}}dV_g\right)^{\frac{2}{n}}\right\}\left(\int_{M}u^{\frac{2n\alpha}{n-2}}dV_g\right)^{\frac{n-2}{n}}.
\end{eqnarray}
Since $W$ is totally trace-free, one has
$$\left|W+\frac{\sqrt{n}}{2\sqrt{2}(n-2)}\mathring{Ric}\circledwedge g\right|^2=|W|^2+\frac{n}{2(n-2)}|\mathring{Ric}|^2$$
and the pinching condition (1.2) implies that $M^n$ is Einstein.\\
\textbf{Case 2.} When $n=4,5$, we have $\frac{4\alpha^2-2(n-2)\alpha+n-2}{4\alpha(n-1)}>0$ for $\alpha>\frac{1}{2}$, and from (3.9), we get
\begin{eqnarray*}
0&\geq&\left\{\left(1-\left(1-\frac{1}{\alpha}\right)^2\right)\sqrt{\frac{n-2}{32(n-1)}}Y(M,[g])-\left(\int_{M}\left(|W|^2+\frac{n}{2(n-2)}u^2\right)^{\frac{n}{4}}dV_g\right)^{\frac{2}{n}}\right\}\left(\int_{M}u^{\frac{2n\alpha}{n-2}}dV_g\right)^{\frac{n-2}{n}}.
\end{eqnarray*}
Taking $\alpha=1$, we have
\begin{eqnarray}
0&\geq&\left\{\sqrt{\frac{n-2}{32(n-1)}}Y(M,[g])-\left(\int_{M}\left(|W|^2+\frac{n}{2(n-2)}u^2\right)^{\frac{n}{4}}dV_g\right)^{\frac{2}{n}}\right\}\left(\int_{M}u^{\frac{2n}{n-2}}dV_g\right)^{\frac{n-2}{n}}.
\end{eqnarray}
Thus the pinching condition (1.2) implies that $M^n$ is Einstein.
\par In particular, for $n=4,5$, the pinching condition (1.2) implies
\begin{eqnarray}
\left(\int_{M}|W|^{\frac{n}{2}}dV_g\right)^{\frac{2}{n}}<\sqrt{\frac{n-2}{32(n-1)}}Y(M,[g])<\frac{2Y(M,[g])}{C_2(n)n},
\end{eqnarray}
where the constant  \[
C_2(n)=\left\{
\begin{array}{cc}
\frac{\sqrt{6}}{2}, &\mbox{if $n=4$,}\\
\frac{8\sqrt{10}}{15}, &\mbox{if $n=5$,}\\
\frac{4(n^2-2)}{n\sqrt{n^2-1}}+\frac{n^2-n-4}{\sqrt{(n-2)(n-1)n(n+1)}},&\mbox{if $n\geq6$}
\end{array}\right.
\]
is defined in Lemma 2.1 of \cite{FX3}. By the rigidity result for positively curved Einstein manifolds (see Theorem 1.1 of \cite{FX3}), (3.12) implies that $M^n$ is isometric to a quotient of the round $\mathbb{S}^n$.
For $n\geq6$, we can choose $\alpha$ such that $\frac{4\alpha^2-2(n-2)\alpha+n-2}{4\alpha(n-1)}>0$ and
$$0\geq\left\{\frac{2Y(M,[g])}{C_2(n)n}-\left(\int_{M}\left(|W|^2+\frac{n}{2(n-2)}u^2\right)^{\frac{n}{4}}dV_g\right)^{\frac{2}{n}}\right\}\left(\int_{M}u^{\frac{2n\alpha}{n-2}}dV_g\right)^{\frac{n-2}{n}}.
$$
From Case 1, the pinching condition (1.2) implies that $M^n$ is Einstein. Hence, the pinching condition (1.2) implies
\begin{eqnarray}
\left(\int_{M}|W|^{\frac{n}{2}}dV_g\right)^{\frac{2}{n}}<\frac{2Y(M,[g])}{C_2(n)n}.
\end{eqnarray}
By the rigidity result for positively curved Einstein manifolds (see Theorem 1.1 of \cite{FX3}), (3.13) implies that $M^n$ is isometric to a quotient of the round $\mathbb{S}^n$.\\
\textbf{Proof of Corollary $1.6$.} To prove Corollary $1.6$, we need the following lemma which was proved by Gursky (see \cite{G}). For completeness, we also write it's proof out.
\begin{lemma}
Let $(M^4,g)$ be a complete 4-dimensional manifold, then the following estimate holds
$$\int_{M}R^2dV_g-12\int_{M}|\mathring{Ric}|^2dV_g\leq Y(M,[g])^2,$$
moreover, the inequality is strict unless $(M^4,g)$ is comformally Einstein.
\end{lemma}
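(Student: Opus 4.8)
The plan is to exploit a conformal invariance special to dimension four: the quantity $\int_M(R^2-12|\mathring{Ric}|^2)\,dV_g$ is a conformal invariant, and the inequality then follows by evaluating it on a Yamabe representative.

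First I would establish the conformal invariance of $\int_M(R^2-12|\mathring{Ric}|^2)\,dV_g$. The Chern-Gauss-Bonnet formula in dimension four reads
$$32\pi^2\chi(M)=\int_M\left(|W|^2-2|\mathring{Ric}|^2+\frac{1}{6}R^2\right)dV_g,$$
and the Weyl energy $\int_M|W|^2\,dV_g$ is conformally invariant, since the density $|W_g|^2\,dV_g$ is pointwise invariant under $g\mapsto e^{2\phi}g$ when $n=4$. Subtracting, $\int_M\big(\tfrac{1}{6}R^2-2|\mathring{Ric}|^2\big)dV_g=32\pi^2\chi(M)-\int_M|W|^2\,dV_g$ is the difference of a topological invariant and a conformal invariant, hence conformally invariant; multiplying by $6$ shows that $\int_M(R^2-12|\mathring{Ric}|^2)\,dV_g$ depends only on $[g]$.

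Next, since $Y(M,[g])$ is likewise conformally invariant, I would compute the left-hand side using the Yamabe metric $\bar g\in[g]$. By the solution of the Yamabe problem (Yamabe, Trudinger, Aubin, Schoen, cited above) the infimum defining $Y(M,[g])$ is attained by a metric $\bar g$ of constant scalar curvature $\bar R$. Writing $V=\mathrm{Vol}(\bar g)$, evaluating the Yamabe functional at $\bar g$ gives $Y(M,[g])=\bar R\,V^{1/2}$, so $Y(M,[g])^2=\bar R^2V=\int_M\bar R^2\,dV_{\bar g}$. By the conformal invariance from the previous step,
$$\int_M(R^2-12|\mathring{Ric}|^2)\,dV_g=\int_M\bar R^2\,dV_{\bar g}-12\int_M|\mathring{Ric}_{\bar g}|^2\,dV_{\bar g}=Y(M,[g])^2-12\int_M|\mathring{Ric}_{\bar g}|^2\,dV_{\bar g}.$$

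Finally, since $\int_M|\mathring{Ric}_{\bar g}|^2\,dV_{\bar g}\geq0$, dropping this term yields the desired inequality, with equality if and only if $\mathring{Ric}_{\bar g}\equiv0$, i.e. the Yamabe representative $\bar g$ is Einstein, which is exactly the statement that $(M^4,g)$ is conformally Einstein. I expect the only nontrivial step to be recognizing the conformal invariance of the integrated combination $R^2-12|\mathring{Ric}|^2$; granting this, the estimate is immediate from the existence of a constant-scalar-curvature Yamabe representative. The one hypothesis to keep in mind is that this existence requires compactness (the lemma is applied to a compact $M^4$), since the Yamabe minimizer is precisely what converts the conformal invariant into $Y(M,[g])^2$.
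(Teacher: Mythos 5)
Your proposal is correct and follows essentially the same route as the paper's proof: conformal invariance of $\int_M\bigl(\tfrac{1}{6}R^2-2|\mathring{Ric}|^2\bigr)dV_g$ via the Chern--Gauss--Bonnet formula and the invariance of the Weyl energy in dimension four, followed by evaluation at a Yamabe minimizer where $Y(M,[g])^2=\int_M R_{\tilde g}^2\,dV_{\tilde g}$ and the trace-free Ricci term is dropped. Your added remark that compactness is what guarantees the Yamabe minimizer (and the validity of Chern--Gauss--Bonnet) is a correct and worthwhile observation, since the lemma as stated says only ``complete.''
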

\begin{proof}
By the Chern-Gauss-Bonnet formula (see the Equation 6.31 of \cite{Be})
\begin{eqnarray}
\int_{M}|W|^2dV_g-2\int_{M}|\mathring{Ric}|^2dV_g+\frac{1}{6}\int_{M}R^2dV_g=32\pi^2\chi(M),
\end{eqnarray}
and the conformal invariance of $\int_{M}|W|^2dV_g$, we find that $-2\int_{M}|\mathring{Ric}|^2dV_g+\frac{1}{6}\int_{M}R^2dV_g$ is also conformally invariant.
 Let $\tilde{g}\in[g]$ be a Yamabe metric. Then
\begin{eqnarray*}
Y(M,[g])^2&=&\frac{\left(\int_M{R_{\tilde{g}}}dV_{\tilde{g}}\right)^2}{\int_MdV_{\tilde{g}}}=\int_M{R}_{\tilde{g}}^2dV_{\tilde{g}}\\
&\geq&\int_M{R}_{\tilde{g}}^2dV_{\tilde{g}}-12\int_M|{\mathring{Ric}}_{\tilde{g}}|_{\tilde{g}}^2dV_{\tilde{g}}\\
&=&\int_MR^2dV_g-12\int_M|\mathring{Ric}|^2dV_g.
\end{eqnarray*}
 The equality case follows immediately.
\end{proof}

By Lemma $3.5$, we get
\begin{eqnarray}
\int_{M}|W|^2dV_g+\int_{M}|\mathring{Ric}|^2dV_g-\frac{Y(M,[g])^2}{48}\leq\int_{M}|W|^2dV_g+\frac{5}{4}\int_{M}|\mathring{Ric}|^2dV_g-\frac{1}{48}\int_{M}R^2dV_g.
\end{eqnarray}
Moreover, the inequality is strict unless $(M^4,g)$ is comformally Einstein. In the first case $``\,<"$, Theorem $1.4$ immediately implies  Corollary $1.6$; In the second case $``\,="$, $g$ is conformally Einstein. Since $g$ has constant scalar curvature, $g$ is Einstein from the proof of Obata Theorem (see Proposition 3.1 of \cite{LP}). By the rigidity result for positively curved Einstein manifolds (see Theorem 1.1 of \cite{FX3}), (3.15) implies that $M^4$ is isometric to a quotient of the round $\mathbb{S}^4$.
\section{Proof of Theorem $1.8$}
\noindent
\textbf{Proof of Theorem $1.8$.}  From (3.1), by Lemma $3.3$, we get
\begin{eqnarray}
\Delta|\mathring{Ric}|^2\geq2|\nabla\mathring{Ric}|^2+4\sqrt{\frac{n-2}{2(n-1)}}|\mathring{Ric}|^2\left\{\frac{R}{\sqrt{2(n-1)(n-2)}}-\left(|W|^2+\frac{n}{2(n-2)}|\mathring{Ric}|^2\right)^{\frac{1}{2}}\right\}.
\end{eqnarray}
Note that (1.5) is equivalent that the second of RHS of (4.1) is nonnegative.
By  the maximum principle, from (4.1) we get $\nabla\mathring{Ric}=0$. Since $M^n$ has positive constant scalar curvature, $M^n$ is a manifold with parallel Ricci tensor.
 Hence $M^n$ is a manifold with harmonic curvature. Using the same argument as in the proof of (3.1), we obtain a Weitzenb\"{o}ck formula (see (2.20) in \cite{F})
\begin{eqnarray}
\Delta|\mathring{Ric}|^2=2|\nabla\mathring{Ric}|^2-2\mathring{R}_{ij}\mathring{R}_{kl}W_{ikjl}+\frac{2n}{n-2}\mathring{R}_{ij}\mathring{R}_{jk}\mathring{R}_{ik}
+\frac{2R}{n-1}|\mathring{Ric}|^2.
\end{eqnarray}
By Remark $3.4$, we get
\begin{eqnarray}\left|-W_{ijkl}\mathring{R}_{ik}\mathring{R}_{jl}+\frac{n}{n-2}\mathring{R}_{ij}\mathring{R}_{jk}\mathring{R}_{ik}\right|
\leq\sqrt{\frac{n-2}{2(n-1)}}|\mathring{Ric}|^2\left(|W|^2+\frac{2n}{n-2}|\mathring{Ric}|^2\right)^{\frac{1}{2}}.\end{eqnarray}
Combing (4.2) with (4.3), we have
\begin{eqnarray}
\Delta|\mathring{Ric}|^2\geq2|\nabla\mathring{Ric}|^2+2|\mathring{Ric}|^2\left\{\frac{1}{n-1}R-\sqrt{\frac{n-2}{2(n-1)}}\left(|W|^2+\frac{2n}{n-2}|\mathring{Ric}|^2\right)^{\frac{1}{2}}\right\}.
\end{eqnarray}
\textbf{Case 1.} $W\neq 0$. By (4.4),
the inequality (1.5) implies
\begin{eqnarray}
0&\geq&2\sqrt{\frac{n-2}{2(n-1)}}|\mathring{Ric}|^2\left\{\sqrt{\frac{2}{(n-1)(n-2)}}R-\left(|W|^2+\frac{2n}{n-2}|\mathring{Ric}|^2\right)^{\frac{1}{2}}\right\}\nonumber\\
&\geq&
2\sqrt{\frac{n-2}{2(n-1)}}|\mathring{Ric}|^2\left\{\left(4|W|^2+\frac{2n}{n-2}|\mathring{Ric}|^2\right)^{\frac{1}{2}}-\left(|W|^2+\frac{2n}{n-2}|\mathring{Ric}|^2\right)^{\frac{1}{2}}\right\}.
\end{eqnarray}
By (4.5), we get $\mathring{Ric}=0$, i.e., $(M^n, {g})$ is  Einstein.\\
\textbf{Case 2.} $W=0$. Note that the inequality $n(n-1)|\mathring{Ric}|^2\leq R^2$ is equivalent to equation (1.5).  From (4.4),  we have
\begin{eqnarray}
0=\frac{2n}{n-2}\mathring{R}_{ij}\mathring{R}_{jk}\mathring{R}_{ik}
+\frac{2R}{n-1}|\mathring{Ric}|^2\geq\frac{2}{n-1}|\mathring{Ric}|^2\left({R}-\sqrt{(n-1)n}|\mathring{Ric}|\right)\geq0.
\end{eqnarray}
Hence
at every point, either
$\mathring{Ric}$ is null, i.e., $M^n$ is Eninstein, and by conformally flatness it has constant positive sectional curvature, or ${R}-\sqrt{(n-1)n}|\mathring{Ric}|=0$, according to the estimation of trace-free symmetric $2$-tensors, it has an eigenvalue of multiplicity
$(n-1)$ and another of multiplicity $1$. Since the Ricci tensor is parallel, by the de Rham decomposition Theorem, $M^n$ is covered isometrically by the product of Einstein manifolds. We have $R=\sqrt{(n-1)n}|\mathring{Ric}|.$ Since $M^n$ is conformally flat and has positive
scalar curvature, then the only possibility is that $M^n$ is covered isometrically by $\mathbb{S}^1\times \mathbb{S}^{n-1}$ with the product metric.
 So $(M^n,g)$ is isometric to either an Einstein manifold or a quotient of $\mathbb{S}^1\times \mathbb{S}^{n-1}$ with the product metric.\\
\textbf{Proof of Corollary $1.9$.} By Theorem $1.8$, we consider the case that $M^n$ is Einstein.  Using the same argument as in the proof of (3.1), we obtain a Weitzenb\"{o}ck formula for Einstein manifolds (see (5) in \cite{FX3})\begin{eqnarray}
\Delta|W|^2=2|\nabla W|^2-2C_3(n)|W|^3+\frac{4R}{n}|W|^2,
\end{eqnarray}
where $C_3(4)=\frac{\sqrt{6}}{2}$ and $C_3(5)=\frac{8\sqrt{10}}{15}$.
 From (4.7), the condition of Corollary $1.9$ implies that
$$
\Delta|W|^2=2|\nabla W|^2+\left(\frac{4R}{n}-2C_3(n)|W|\right)|W|^2\geq0.
$$
Hence $W=0$, i.e., $M^n$ is conformally flat. So $(M^n,g)$ is isometric to  a quotient of the round $\mathbb{S}^n$. This completes the proof of Corollary $1.9$.
\begin{remark}
Let $(M^n,g) (n\geq4)$ be an $n$-dimensional  compact Bach-flat Riemannian manifold with positive constant scalar curvature. If
$$
|W|^2+\frac{{n}}{2(n-2)}|\mathring{Ric}|^2<\frac{1}{{2(n-2)(n-1)}}R^2,
$$
then $M^n$ is an Einstein manifold.
\end{remark}
\textbf{Acknowledgments:} The authors thank the referee for his helpful suggestions.

\end{document}